\documentclass[12pt,fleqn]{amsart}

\usepackage[utf8]{inputenc}
\usepackage[T1]{fontenc}

\usepackage{a4wide}

\usepackage{amssymb,amsthm,latexsym}
\usepackage{mathrsfs}
\usepackage{dsfont}
\usepackage[mathscr]{euscript}
\usepackage{esint}
\usepackage{newtxtext,newtxmath} 
\usepackage[usenames,dvipsnames,x11names]{xcolor}
\usepackage{listings}

\lstdefinelanguage{Mathematica}{
  morekeywords={ForAll, Resolve, Reals, Sqrt},
  sensitive=true,
  morecomment=[l]{(*},
  morecomment=[r]{*)},
  morestring=[b]{"}
}

\usepackage{url}

\newtheorem{theorem}{Theorem}[section]
\newtheorem{lemma}[theorem]{Lemma}
\newtheorem{corollary}[theorem]{Corollary}
\newtheorem{proposition}[theorem]{Proposition}
\newtheorem{definition}[theorem]{Definition}

\theoremstyle{remark}           %
\newtheorem{remark}[theorem]{Remark}

\newcounter{maintheorem}

\newtheorem{mainth}[maintheorem]{Theorem}

\newtheorem*{theorembprime}{Theorem B$^{\prime}$}
\renewcommand{\le}{\leqslant}
\renewcommand{\ge}{\geqslant}
\newcommand{\supp}{\operatorname{supp}}

\newcommand{\vertiii}[1]{{\left\vert\kern-0.25ex\left\vert\kern-0.25ex\left\vert #1 
    \right\vert\kern-0.25ex\right\vert\kern-0.25ex\right\vert}}
\newcommand{\N}{\mathbb{N}}

\newcounter{smallromans}

\newenvironment{romanenumerate}
{\begin{list}{{\normalfont\textrm{(\roman{smallromans})}}}%
  {\usecounter{smallromans}\setlength{\itemindent}{0cm}%
   \setlength{\leftmargin}{5.5ex}\setlength{\labelwidth}{5.5ex}%
   \setlength{\topsep}{.5ex}\setlength{\partopsep}{.5ex}%
   \setlength{\itemsep}{0.1ex}}}%
{\end{list}}

\newcounter{smallromansdash}

{\end{list}}

\newcounter{bigromans}

{\end{list}}

\title[Uniform Property (S)]{Uniform Property (S)}
\author[W.~B.~Johnson]{William B.~Johnson}
\author[T.~Kania]{Tomasz Kania}
\address{Department of Mathematics, Texas A\&M University, College Station, TX 77843, U.S.A}
\email{johnson@math.tamu.edu}
\address{Institute of Mathematics, Czech Academy of Sciences, \v{Z}itn\'{a} 25, 115~67 Prague 1, Czech Republic \& Institute of Mathematics, Jagiellonian University, {\L}ojasiewicza 6, 30-348 Kraków, Poland}
\email{tomasz.marcin.kania@gmail.com}

\date{\today}
\thanks{IM CAS (RVO 67985840). }
\subjclass[2010]{46B04, 46B20 (primary), 11H06 (secondary).}
\keywords{Banach space, Steinhaus property, uniform property (S), renorming, strictly convex space}

\begin{document}
\begin{abstract}
We introduce and investigate a quantitative version of Steinhaus' property~\((S)\) for Banach spaces, called the \emph{uniform property~\((S)\)}.  A Banach space~\(X\) is said to have uniform~\((S)\) if for every pair of distinct unit vectors \(x,y\in X\) and every~\(a>0\), the difference of the perturbed norms
\[
   \sup_{\|z\|\le a}\big|\|x+z\|-\|y+z\|\big|
\]
is bounded below by a positive function of~\(a\) and~\(\|x-y\|\).  We compute this modulus exactly for the spaces \(L_1(\mu)\) with atomless measure~\(\mu\),
\[
   U_{L_1(\mu)}(d;a)=\Big(\tfrac{4a}{2+d}\wedge 1\Big)d.
\]
The class of spaces with uniform~\((S)\) is stable under ultrapowers, Bochner-\(L_1\) constructions, and contains all Gurari\u{\i} spaces as well as Banach lattices of almost universal disposition.  In particular, every Banach space embeds isometrically into a non-strictly convex Banach space of the same density having uniform~\((S)\).  We further exhibit an explicit equivalent renorming of~\(\ell_1(\Gamma)\),
\[
   \|x\|_S=\big(\|x\|_1^2+\|x\|_2^2\big)^{1/2},
\]
which endows~\(\ell_1(\Gamma)\) and all its ultrapowers with uniform~\((S)\).
These results settle, in~ZFC, several open questions about the quantitative geometry of property~\((S)\) posed by Kochanek and the second-named author.
\end{abstract}

\maketitle

\section{Introduction}

It is a familiar observation from plane geometry that for every positive integer $n$, it is possible to encircle precisely $n$ points with integer coordinates in the Euclidean plane (this seems to be first mentioned by Steinhaus in 1957 as problem 498 in volume 10 of \emph{Matematyka: Czasopismo dla Nauczycieli}, a journal for Mathematics teachers in Poland, and later also as Problem 24 in \cite[p.~17]{Steinhaus:1965}). In May 1967, Steinhaus put in the New Scottish Book (that revived the traditions of the Lvov Scottish Book in Wroc{\l}aw) problem 782 of whether every measurable subset of the plane that has infinite Lebesgue measure can be moved so that the intersection of the result set with the integer lattice is infinite. Worth mentioning is also a result from plane geometry by Fast and \'Swierczkowski \cite{FastSwierczkowski}, which asserts that for any open bounded region $G$ in the Euclidean plane with a~simple closed curve as its boundary and an integer-valued area, there exists an (isometric copy of the) integral lattice sharing precisely $|G|$ points with $G$. \smallskip

The possibility of encircling any number of lattice points, mentioned by Steinhaus, was generalised by Zwole\'nski \cite{Zwolenski:2011} to arbitrary Hilbert spaces as follows: for every infinite subset $A$ in a~Hilbert space $X$ with a finite intersection with each open ball, there exists a dense subset $Y$ of $X$ such that for all $y \in Y$ and $n \in \N$ there is an open ball $B$ centred at $y$ with $|A \cap B| = n$. Kochanek and the second-named author \cite{KaniaKochanek:2017} expanded beyond Hilbert spaces, characterising geometrically Banach spaces for which Zwole\'nski's assertion remains valid; these were termed spaces having \emph{property $(S)$} after Steinhaus. \smallskip

In this paper, we adopt one of the equivalent geometric conditions enumerated in \cite[Theorem A]{KaniaKochanek:2017} as the definition of spaces with property $(S)$.

\begin{definition}
    A normed space $X$ has \emph{property $(S)$} if, for every pair of distinct points $x$ and $y$ in the unit sphere of $X$ and every $a > 0$, there exists a vector $z$ in $X$ with $\|z\| < a$ such that $\|x + z\| \neq \|y + z\|$.
\end{definition}

This definition includes possibly incomplete spaces; however, completeness is necessary to establish the equivalence between the various properties enumerated in \cite[Theorem A]{KaniaKochanek:2017}. (In \cite{KaniaKochanek:2017}, the above-introduced property was called {property} $(S^{\prime\prime})$, while a metric space $X$ was said to have property $(S)$ if, for any infinite subset $A$ of $X$ with a finite intersection with every ball, there exists a dense subset $Y$ of $X$ such that for all $y \in Y$ and $n \in \N$ there is a ball $B$ centred at $y$ with $|A \cap B| = n$.) The goal of this paper is to quantify property $(S)$, specifically to introduce \emph{uniform} property $(S)$, and to demonstrate that various well-known Banach spaces possess uniform property $(S)$, or at least can be equivalently renormed to have uniform property $(S)$. In the process of doing this, we solve a problem about property $(S)$ that was raised in \cite{KaniaKochanek:2017}.\smallskip

Let $X$ be a normed space. For $x,y\in X$ and $a>0$, we define the $(S)$-\emph{modulus} by
\begin{equation}\label{U(x,y)}
    U_X(x,y;a) = U(x,y;a) :=  \sup_{\|z\| \leqslant a}\big|  \|x+z\| - \|y+z\| \big|.
\end{equation}
For $0<d<2$ and $a>0$ we define
\begin{equation}\label{U(b:a)}
    U_X(d;a) = U(d;a) := \inf_{\overset{x, y \in S_X,}{\|x-y\| \geqslant d}} U(x,y;a).
\end{equation}
\begin{definition}
    A Banach space $X$ has {\sl uniform property (S)} provided that for all $0<d<2, a>0$ one has
    \[
        U_X(d;a) > 0.
    \]
\end{definition}

It is evident that uniform property $(S)$ implies property $(S)$, and a standard compactness argument shows that if $X$ is finite-dimensional and has property $(S)$, then it also has uniform property $(S)$. Uniform property $(S)$ is arguably more manageable than property $(S)$ because the $(S)$-modulus, $U(d; a)$, is stable under passing to completions and ultrapowers. Moreover, $U(x, y; a)$ changes only slightly if $x$ and $y$ are perturbed, provided the perturbation is small relative to $d$ and $a$. The stability of $U(d; a)$ will be used in the sequel without further comment. \smallskip

The modulus of uniform property (S), as defined, may be interpreted as the $p = 1$ case of the whole scale of moduli whose strict positivity is equivalent to uniform property (S). Indeed, for a~normed space $X$ and $p\in [1,\infty)$ we set
\begin{equation}\label{Up(b:a)}
    U_X^{(p)}(d;a) = U^{(p)}(d;a) := \inf_{\substack{x, y \in S_X\\ \|x-y\| \geqslant d}} U^{(p)}(x,y;a)\quad (d\in (0,2),\ a>0),
\end{equation}
where
\begin{equation}\label{Up(x,y)}
    U_X^{(p)}(x,y;a) = U^{(p)}(x,y;a) :=  \sup_{\|z\| \leqslant a}\big|  \|x+z\|^p - \|y+z\|^p \big|\quad (x, y\in X,\ a > 0).
\end{equation}
Certainly, $U_X^{(1)}(d;a) = U_X(d;a)$ and for $p > 1$ we have $U_X^{(p)}(d;a) > 0$ if and only if $U_X(d;a) > 0$, which follows (for small enough $a$) from the elementary fact:

\begin{lemma}\label{lem:p-vs-1}
Fix $p>1$ and $a\in(0,1)$. If $r,s\in[1-a,1+a]$, then by the mean value theorem
\[
  p(1-a)^{p-1}\,|r-s|\ \le\ |r^p-s^p|\ \le\ p(1+a)^{p-1}\,|r-s|.
\]
Consequently, for every normed space $X$ and every $a\in(0,1)$ we have, for $0<d<2$,
\[
  p(1-a)^{p-1}\,U_X(d;a)\ \le\ U_X^{(p)}(d;a)\ \le\ p(1+a)^{p-1}\,U_X(d;a).
\]
In particular, for any $a\in(0,1)$ and $p>1$, $U_X^{(p)}(d;a)>0$ if and only if $U_X(d;a)>0$.
\end{lemma}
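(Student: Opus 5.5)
The first inequality is a direct application of the mean value theorem. Take \(r\neq s\) in \([1-a,1+a]\); the case \(r=s\) is trivial. The function \(t\mapsto t^p\) is differentiable on the interval between them, so there is \(\xi\) strictly between \(r\) and \(s\) with
\[
|r^p-s^p| = p\,\xi^{p-1}\,|r-s|.
\]
Since \(p-1>0\), the map \(t\mapsto t^{p-1}\) is increasing on \([0,\infty)\). Because \(\xi\in[1-a,1+a]\) and \(1-a>0\) (as \(a<1\)), this gives \((1-a)^{p-1}\le \xi^{p-1}\le (1+a)^{p-1}\), and the two-sided scalar bound follows.

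To pass to the moduli, fix \(x,y\in S_X\) and \(z\) with \(\|z\|\le a\). By the triangle inequality, \(r=\|x+z\|\) and \(s=\|y+z\|\) both lie in \([1-a,1+a]\). The scalar bound therefore gives, for every such \(z\),
\[
p(1-a)^{p-1}\big|\|x+z\|-\|y+z\|\big| \le \big|\|x+z\|^p-\|y+z\|^p\big| \le p(1+a)^{p-1}\big|\|x+z\|-\|y+z\|\big|.
\]
Because the constants are positive and do not depend on \(z\), taking the supremum over \(\|z\|\le a\) preserves both inequalities. This yields
\[
p(1-a)^{p-1}U(x,y;a)\le U^{(p)}(x,y;a)\le p(1+a)^{p-1}U(x,y;a).
\]
Taking the infimum over \(x,y\in S_X\) with \(\|x-y\|\ge d\) preserves the inequalities for the same reason, which gives the stated comparison of \(U_X^{(p)}(d;a)\) and \(U_X(d;a)\).

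The final assertion follows because \(p(1-a)^{p-1}\) and \(p(1+a)^{p-1}\) are strictly positive and finite. The lower bound shows that \(U_X(d;a)>0\) implies \(U_X^{(p)}(d;a)>0\), and the upper bound gives the converse. There is no real obstacle here; the only point needing care is that \(a<1\), which keeps \(1-a>0\). This guarantees that the interval \([1-a,1+a]\) lies in \((0,\infty)\), so the lower constant is positive and the mean value theorem applies to \(t^p\) without any issue at \(0\).
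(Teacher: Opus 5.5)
Your proposal is correct and follows the paper's own argument: apply the mean value theorem to $t\mapsto t^p$ on $[1-a,1+a]$ with $\xi$ confined to that interval, then use it pointwise for $r=\|x+z\|$, $s=\|y+z\|$ and pass to the supremum over $z$ and the infimum over $(x,y)$. You have simply written out the details the paper leaves implicit, namely why $r,s\in[1-a,1+a]$ and why the positivity of the constants lets the inequalities survive the supremum and infimum.
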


\begin{proof}
Apply the one-variable mean value theorem to $t\mapsto t^p$ on $[1-a,1+a]$ and note that
the intermediate value $\xi$ lies in $[1-a,1+a]$.
\end{proof}

Let us illustrate it briefly with inner-product spaces.
\begin{proposition}
    Let $X$ be an inner-product space of dimension at least $2$. Then $U_X^{(2)}(d; a) = 2ad$ for $a > 0$ and $d\in (0, 2)$.
\end{proposition}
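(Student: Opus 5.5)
The plan is to expand the squares using the inner product, which turns the quantity inside the supremum into a linear functional of $z$. For $x,y\in S_X$ and any $z$,
\[
\|x+z\|^2-\|y+z\|^2 = \|x\|^2-\|y\|^2 + 2\operatorname{Re}\langle x-y,z\rangle = 2\operatorname{Re}\langle x-y,z\rangle ,
\]
because $\|x\|=\|y\|=1$. Hence $U^{(2)}(x,y;a)=\sup_{\|z\|\le a}2|\operatorname{Re}\langle x-y,z\rangle|$. By Cauchy--Schwarz this is at most $2a\|x-y\|$. Taking $z=a(x-y)/\|x-y\|$ attains the bound, so $U^{(2)}(x,y;a)=2a\|x-y\|$ exactly. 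Completeness is not needed, since the maximiser lies in the span of $x,y$.

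Next we take the infimum over pairs with $\|x-y\|\ge d$. This gives $U^{(2)}(d;a)=\inf\{2a\|x-y\| : x,y\in S_X,\ \|x-y\|\ge d\}\ge 2ad$.

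For the reverse inequality, we need a pair of unit vectors at distance exactly $d$. This is where $\dim X\ge 2$ enters: choose orthonormal $e_1,e_2$ and consider $x=e_1$ and $y=\cos\theta\, e_1+\sin\theta\, e_2$. Then $\|x-y\|=2\sin(\theta/2)$, which runs continuously over $[0,2]$ as $\theta$ runs over $[0,\pi]$, so some $\theta$ gives $\|x-y\|=d$. In dimension one the only distances available are $0$ and $2$, which is why the hypothesis is needed.

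Combining the two inequalities yields $U^{(2)}(d;a)=2ad$. There is no real obstacle here. The only points requiring care are the real-part convention in complex spaces and the attainability of the distance $d$, both of which are handled above.
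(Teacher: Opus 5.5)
Your proof is correct and follows the paper's argument: expanding the squares reduces the quantity to $2|\langle x-y,z\rangle|$, whose supremum over $\|z\|\le a$ is $a\|x-y\|$, and the infimum $2ad$ is attained using a pair of unit vectors at distance exactly $d$, which exists when $\dim X\ge 2$. Your use of the real part (the paper writes $2\langle x,z\rangle$, implicitly treating the scalars as real), your explicit maximiser $z=a(x-y)/\|x-y\|$, and your rotation construction in $\operatorname{span}\{e_1,e_2\}$ only fill in details the paper leaves implicit.
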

\begin{proof}
Let \(x, y\in S_X\) and \(z\) with \(\|z\| \leqslant a\). We have
\(\|x+z\|^2 = \|x\|^2 + \|z\|^2 + 2\langle x, z \rangle\) and \(\|y+z\|^2 = \|y\|^2 + \|z\|^2 + 2\langle y, z \rangle\). Since \(x\) and \(y\) are unit vectors, we have
\[
    \big|\|x+z\|^2 - \|y+z\|^2\big|
    = 2\big|\langle x-y, z \rangle\big|.
\]
Maximising over all \(z\) with \(\|z\| \leqslant a\), we get
\[
    \sup_{\|z\| \leqslant a} \big| \langle x-y, z \rangle \big|
    = a \|x-y\|,
\]
hence
\[
\sup_{\|z\| \leqslant a}\big| \|x+z\|^2 - \|y+z\|^2 \big| = 2a \|x-y\|.
\]
Therefore,
\[
U_X^{(2)}(d;a)
=\inf_{\substack{x, y \in S_X\\ \|x-y\| \geqslant d}} 2a\|x-y\|
=2ad,
\]
where the last equality uses that in an inner-product space with $\dim X\ge 2$, for every $d\in(0,2)$ there exist $x,y\in S_X$ with $\|x-y\|=d$.
\end{proof}

It is possible that every Banach space, or at least every separable Banach space, has an equivalent renorming with uniform property $(S)$. Conversely, there might be non-separable Banach spaces that do not have renormings with property $(S)$. In Section \ref{problems}, we discuss these open problems further. \smallskip

\begin{remark}
 Fix a modulus of uniform convexity $\delta(\cdot)$. We want to prove that $U_X(d;a) \geqslant \frac{4a}{d}\delta(d)$. For this, take $d\in (0,2]$ and let $x,y \in S_X$ be such that $\|x-y\| = d$. Fix $0 < a \leqslant d/2$, and set $z = \frac{a}{d}(y-x)$, so $x+z$ is on the line segment from $x$ to $\tfrac{1}{2}(x+y)$. Thus, we may pick $\lambda, \beta\in [0,1]$ such that
\begin{equation}\label{eq:uc}
    x+z = \lambda x + (1-\lambda) \tfrac{x+y}{2} \mbox{ and } y = \beta \tfrac{x+y}{2} + (1-\beta) (y+z).
\end{equation}
By the very definition of $\delta(\cdot)$, we have $\|(x+y)/2\| \leqslant 1 - \delta(d)$.
Since $1 - \lambda = \tfrac{2a}{d}$, we get 
\[
    \|x+z\| \leqslant 1 - 2\cdot\tfrac{a}{d} + 2\cdot\tfrac{a}{d} (1 - \delta(d)) = 1 - \tfrac{2a}{d} \delta(d).
\]
Also, $\tfrac{\beta}{2} + \tfrac{1-\beta}{-\tfrac{a}{d}} = 0$ yields $\beta = \tfrac{2a}{2a + d}$, so the expression for $y$ in \eqref{eq:uc} gives
\[
    1 \leqslant \frac{2a}{2a+d} (1-\delta(d)) + \frac{d}{2a+d} \|y+z\|,
\]
which simplifies to $1 + 2\cdot\tfrac{a}{d}\cdot \delta(d) \leqslant \|y+z\|$. Finally, $\|y+z\| - \|x+z\| \geqslant \frac{4a}{d}\cdot\delta(d)$. 
\end{remark}

Property $(S)$ is weaker than strict convexity. Indeed, \cite[Theorem A \& Proposition 1]{KaniaKochanek:2017} imply together that strictly convex spaces have $(S)$, yet only in dimension at most two are these properties equivalent. In infinite dimension, for example, the strictly convex renorming of $c_0$ given by 
\[
    \|x\| := \|x\|_{\infty} + \|Tx\|_2\quad (x\in c_0),
\]
where $T$ is an injective operator from $c_0$ into $\ell_2$, does not have uniform property $(S)$. We do not know whether $c_0$ has an equivalent norm with uniform property $(S)$. We remark in passing that should $c_0$ have such a renorming, so would the quotient space $\ell_\infty / c_0$ (at least under the Continuum Hypothesis). This is because having uniform property $(S)$ is stable under taking ultrapowers and under CH every ultrapower of $c_0$ with respect to a~non-principal ultrafilter on $\mathbb N$ is isomorphic to $\ell_\infty / c_0$ (this follows from Parovi\v{c}enko's theorem as explained in \cite{Ghasemi}).\smallskip

We begin by listing three classes of Banach spaces with uniform property $(S)$.

\begin{mainth}\label{ThA} The following spaces have uniform property $(S)$:
\begin{romanenumerate}
    \item\label{ThAi}  the Bochner spaces $L_p(\mu,X)$ for any atomless (probability) measure space $(\Sigma, \mathcal{A}, \mu)$, $p\in [1,\infty)$, and any Banach space $X$. In particular,
        \begin{itemize}
            \item $L_1([0,1], C[0,1])$ is a separable space with uniform property $(S)$ universal for all separable Banach spaces;
            \item $L_1([0,1], C(\{0,1\}^{\mathbb N}, L_1[0,1]))$ is a separable Banach lattice with uniform property $(S)$ universal for all separable Banach lattices with respect to isometric lattice embeddings.
        \end{itemize}
    \item\label{ThAii} Gurari\u{\i} spaces of arbitrary density satisfy
        \[ 
            U_{G}(d;a) = \Big( \frac{4a}{2+d} \wedge 1 \Big) d \qquad (0<d \leqslant 2,\; 0<a).
        \]
    \item\label{ThAiii} Banach lattices of almost universal disposition for finitely generated Banach lattices including the one constructed by Tursi \cite{Tursi:2020}.
\end{romanenumerate}
\end{mainth}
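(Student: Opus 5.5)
The three parts need different arguments. For (i) I would use a cut-off construction, and for (ii) and (iii) a transfer argument based on universal disposition.

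\textbf{Part (i).} Let $f,g\in S_{L_p(\mu,X)}$ with $\|f-g\|\ge d$.

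\emph{Case $p=1$.} For a measurable set $E$, consider the two perturbations $z_1=-f\mathbf 1_E$ and $z_2=-g\mathbf 1_E$. Since $\int\|f\|=\int\|g\|=1$, a direct computation gives
\[
\|g+z_1\|-\|f+z_1\|=\int_E\big(\|g-f\|+\|f\|-\|g\|\big),\qquad
\|f+z_2\|-\|g+z_2\|=\int_E\big(\|f-g\|+\|g\|-\|f\|\big).
\]
Both quantities are nonnegative and they sum to $2\int_E\|f-g\|$. Hence one of them is at least $\int_E\|f-g\|$.

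Next I would apply Lyapunov's convexity theorem to the $\mathbb R^3$-valued atomless measure
\[
E\mapsto\Big(\int_E\|f\|,\ \int_E\|g\|,\ \int_E\|f-g\|\Big).
\]
For $t=a\wedge 1$ it yields a set $E$ on which each coordinate equals $t$ times its total. Then $\|z_i\|\le a$ for $i=1,2$, and $U(f,g;a)\ge (a\wedge1)d$.

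\emph{Case $p>1$.} The same computation with $p$-th powers gives
\[
\int_E\big(\|g-f\|^p+\|f\|^p-\|g\|^p\big)\quad\text{and}\quad\int_E\big(\|f-g\|^p+\|g\|^p-\|f\|^p\big),
\]
which sum to $2\int_E\|f-g\|^p$. Lyapunov's theorem applied with $t=a^p$ then gives $U^{(p)}(f,g;a)\ge a^pd^p$. By Lemma \ref{lem:p-vs-1}, together with monotonicity of $U$ in $a$, this yields $U_X(d;a)>0$.

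\emph{Universality.} The two separable examples follow by embedding $C[0,1]$, respectively $C(\{0,1\}^{\mathbb N},L_1)$, isometrically (as a lattice in the second case) into the Bochner space as constant functions. One then invokes the known isometric (lattice) universality of these spaces.

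\textbf{Part (ii).} I would prove the two inequalities separately.

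\emph{Lower bound.} Fix $x,y\in S_G$ and let $E=\operatorname{span}\{x,y\}$. I would build an explicit one-dimensional extension $F=E\oplus\mathbb R w$, with norm given by a pushout/amalgamation with $\ell_1$. In $F$ the vector $z=\lambda w$ with $\|z\|\le a$ should satisfy
\[
\big|\|x+z\|-\|y+z\|\big|\ \ge\ \Big(\tfrac{4a}{2+d}\wedge1\Big)\|x-y\|-\varepsilon.
\]
Almost universal disposition then lets me $\varepsilon$-isometrically embed $F$ into $G$, extending the identity on $E$. Continuity of $U$ in its arguments lets me discard $\varepsilon$.

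\emph{Upper bound.} I would show that in every normed space there are (or, in $G$, one can choose) unit vectors $x,y$ with $\|x-y\|=d$ for which no $z$ with $\|z\|\le a$ does better. My plan is to first compute the answer in $L_1$, where the abstract asserts the same formula. I would test natural extremal pairs in $\ell_1^2$ or $\ell_\infty^2$, which sit isometrically in $G$, and verify the reverse inequality using only the triangle inequality and convexity along the segment, as in the uniform convexity Remark. Note that the trivial bound $\le d$ accounts for the term $\wedge1$.

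I expect this exact matching constant $4a/(2+d)$ to be the main obstacle. Specifically, one must optimise the amalgamated norm in the lower bound and find the correct extremal pair in the upper bound. I would first try the pair $x=e_1$, $y=(1-d/2)e_1+(d/2)e_2$ in $\ell_1^2$ and check whether the resulting optimal $z$ reproduces this value.

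\textbf{Part (iii).} I would argue as in the lower bound of (ii), now in the lattice category. The sublattice generated by $x,y$ is finitely generated, and I would embed it into a finitely generated lattice of $L_1$ type in which the cut-off perturbation of part (i) is available. Almost universal disposition for finitely generated Banach lattices then transfers that perturbation back with an $\varepsilon$-loss, giving a positive lower bound for $U$ of the form $c(a)d$.
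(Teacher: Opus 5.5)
\textbf{Part (i).} This part is correct, and you take a different and arguably cleaner route than the paper. The paper first treats constant functions ($z=-x\mathbf 1_A$ with $\mu(A)=a$). It then reduces arbitrary $f,g$ to constant ones by approximating with functions of finitely many coordinates and rewriting $L_1([0,1]^{\mathbb N},X)$ as $L_1([0,1]^{\mathbb N\setminus F},L_1([0,1]^F,X))$. Your Lyapunov set $E$ handles arbitrary $f,g$, non-separable $\mu$ and $p>1$ all at once. In fact, since $\int_E\|f\|^p=\int_E\|g\|^p=t$, each of your two differences equals exactly $t\|f-g\|^p$.

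\textbf{Part (ii), lower bound.} Here there is a genuine gap. The statement asserts the \emph{exact} value $(\frac{4a}{2+d}\wedge 1)d$, and neither inequality is established. You never write down the amalgamated norm on $E\oplus\mathbb R w$, nor do you verify the constant. The only tool you have proved, part (i), would give only $(a\wedge1)d$ even after an isometric embedding of $\operatorname{span}\{x,y\}$ into $L_1$. That is too small by the factor $\frac{4}{2+d}$. The paper obtains the sharp bound in two steps: it embeds $\operatorname{span}\{x,y\}$ isometrically into $L_1$ (Herz, Lindenstrauss), and then invokes the exact computation $U_{L_1}(d;a)=(\frac{4a}{2+d}\wedge1)d$ of Theorem~\ref{L1}. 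That computation is a separate case analysis: reduce to $fg\ge0$ and $f\vee g\equiv\frac{2+d}{2}$, then, after a monotone rearrangement, cut off $f$ on an initial piece of $[f\ge g]$ or $g$ on an initial piece of $[g>f]$. This sharp $L_1$ estimate, or an explicitly optimised one-point extension doing the same job, is the missing idea.

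\textbf{Part (ii), upper bound.} The pair you propose fails. Take $x=e_1$, $y=(1-\frac d2)e_1+\frac d2e_2$ in $\ell_1^2$ and $a\le d/2$. Already $z=-ae_2$ gives $\|x+z\|_1-\|y+z\|_1=(1+a)-(1-a)=2a$, which exceeds $\frac{4a}{2+d}d$ when $d<2$. Because $x$ is an endpoint of the face, convexity along the segment controls only one sign of $\|x+z\|-\|y+z\|$.

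The extremal configuration (Theorem~\ref{ThmC}) places $x,y$ symmetrically inside a segment $[v,w]\subset S_G$ of length $2$ (available in $G$, cf.\ Theorem~\ref{ThmC}). The point $x$ sits at distance $1-\frac d2$ from $v$, and $y$ at distance $1-\frac d2$ from $w$. In $\ell_1^2$ this is $x=(\frac{2+d}{4},\frac{2-d}{4})$, $y=(\frac{2-d}{4},\frac{2+d}{4})$. Then $y=tx+(1-t)w$ and $x=ty+(1-t)v$ with $1-t=\frac{2d}{2+d}$. Convexity, together with $\|x+z\|,\|y+z\|\ge1-a$ and $\|v+z\|,\|w+z\|\le1+a$, gives $\bigl|\|x+z\|-\|y+z\|\bigr|\le 2a(1-t)=\frac{4a}{2+d}d$ for every $z\in G$ with $\|z\|\le a$.

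\textbf{Part (iii).} The intermediate lattice cannot be ``of $L_1$ type'' in the AL sense. Sublattices of AL-spaces satisfy $\|u+v\|=\|u\|+\|v\|$ for $u,v\ge0$, so, for example, $\operatorname{latt}\{e_1,e_2\}=\ell_\infty^2$ does not embed there lattice-isometrically. You need the lattice-universal Bochner lattice $L_1([0,1],C(\{0,1\}^{\mathbb N},L_1[0,1]))$ from your own part (i). Apply your cut-off perturbation there, and then use property (T) on the finitely generated sublattice $\operatorname{latt}\{Tx,Ty,z\}$, exactly as the paper does.
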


The latter part of Theorem~\ref{ThA} is an immediate consequence of clause \eqref{ThAi}, as for any space $X$, the Bochner space $L_1([0,1], X)$ possesses uniform property $(S)$, and $X$ embeds therein via almost everywhere constant functions (which constitutes a lattice embedding if $X$ is a Banach lattice). It is clear that $L_1([0,1], X)$ has the same density as $X$. \smallskip

Gurari\u{\i} spaces are Banach spaces exhibiting almost universal disposition for finite-dimensional normed spaces. Tursi \cite{Tursi:2020} constructed a separable Banach lattice that demonstrates almost universal disposition for the class of finitely generated Banach lattices. Garbuli\'nska-W\k{e}grzyn and Kubi\'s \cite[Theorem 3.6]{GarbKubis} established that every Banach space isometrically embeds into a~Gurari\u{\i} space with the same density. A similar statement for Banach lattices will be demonstrated using analogous methods in the Appendix. As a result, clauses \eqref{ThAii}--\eqref{ThAiii} of Theorem \ref{ThA} lead to the following corollary, which resolves (in ZFC) a problem left open in \cite{KaniaKochanek:2017}.

\begin{corollary}\label{gurarii}
Every Banach space embeds isometrically into a non-strictly convex space with the same density that has uniform property $(S)$; for Banach lattices, this constitutes an isometric lattice embedding.
\end{corollary}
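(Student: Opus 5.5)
Given a Banach space $X$ of density $\kappa$, there are two natural routes, and I would present both.

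\emph{Route via $L_1$.} Let $Y := L_1([0,1], X)$ and embed $X$ into $Y$ by sending $x$ to the constant function $t\mapsto x$. This map is isometric, since $\int_0^1\|x\|\,dt = \|x\|$. If $X$ is a Banach lattice, the embedding is also a lattice embedding, because the lattice operations in $Y$ are computed pointwise.

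The remaining properties of $Y$ are checked as follows.
\begin{itemize}
\item \emph{Uniform property $(S)$.} This is Theorem~\ref{ThA}\eqref{ThAi} with $p=1$ and Lebesgue measure.
\item \emph{Density.} Simple functions built from a dense subset of $X$ of size $\kappa$ and from rational intervals are dense in $Y$, so $\operatorname{dens} Y = \kappa$ (for infinite $\kappa$).
\item \emph{Failure of strict convexity.} Fix a unit vector $x\in X$ and put $f = x\mathbf 1_{[0,1/2]}$ and $g = x\mathbf 1_{[1/2,1]}$, rescaled to have norm $1$, i.e.\ $f = 2x\mathbf 1_{[0,1/2]}$ and $g = 2x\mathbf 1_{[1/2,1]}$. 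Then $\|f\|=\|g\|=1$, $f\neq g$, and
\[
\Big\|\tfrac{f+g}{2}\Big\| = \int_0^1 \|x\|\,dt = 1 .
\]
So the segment $[f,g]$ lies in the unit sphere, and $Y$ is not strictly convex.
\end{itemize}

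\emph{Alternative route via Gurari\u{\i} spaces.} By Garbuli\'nska-W\k{e}grzyn--Kubi\'s \cite[Theorem 3.6]{GarbKubis}, $X$ embeds isometrically into a Gurari\u{\i} space $G$ of the same density. By Theorem~\ref{ThA}\eqref{ThAii}, $G$ has uniform property $(S)$. The space $G$ fails strict convexity because it contains isometric copies of $\ell_\infty^2$ by universal disposition, up to $\varepsilon$. To get an exact segment in the sphere one needs an extra step: either use $\ell_1^2$ and the isometric universality available in Gurari\u{\i} spaces, or simply rely on the $L_1$ route above.

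For lattices, the Appendix analogue of the Garbuli\'nska-W\k{e}grzyn--Kubi\'s embedding into a lattice of almost universal disposition, combined with Theorem~\ref{ThA}\eqref{ThAiii}, gives the same conclusion. Even so, the $L_1([0,1],X)$ construction already handles the lattice case directly.

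The only point needing care is the non-strict convexity witness. The disjointly supported functions $f$ and $g$ above settle it cleanly, which is why I would use the $L_1$ route as the main proof.
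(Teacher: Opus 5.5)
Your proof is correct, but your main route is not the one the paper uses for this corollary.

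\textbf{The paper's route.} The paper derives the corollary from Theorem~A(ii)--(iii). It embeds $X$ isometrically into a Gurari\u{\i} space of the same density (Garbuli\'nska-W\k{e}grzyn--Kubi\'s), or, for lattices, lattice-isometrically into a lattice with property $(T)$ of the same density (the Appendix). Uniform $(S)$ of the host is then supplied by those clauses.

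\textbf{Your route.} You use only clause (i) and the constant-function embedding into $L_1([0,1],X)$. The paper does record this observation just after Theorem~A, but it attributes the corollary to clauses (ii)--(iii).

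\textbf{What your route buys.} It is more elementary and self-contained. Clause (i) for $p=1$ rests on a one-line computation (take $z=-x$ on a set of measure $a$) plus a density reduction. The lattice case needs none of the transfinite push-out construction of the Appendix. You also exhibit an explicit segment $[f,g]$ in the unit sphere, whereas the paper leaves non-strict convexity of its hosts implicit.

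As you correctly point out, that step on the paper's route needs an \emph{exact} isometric copy of a non-strictly convex plane, since $(1+\varepsilon)$-copies of $\ell_\infty^2$ can sit inside strictly convex spaces. It is supplied by the fact that Gurari\u{\i} spaces contain isometric copies of all finite-dimensional (indeed all separable) spaces. For lattices it comes directly from property $(T)$, which, applied to $E=\mathbb{R}(1,1)\subset F=\ell_\infty^2$, produces exact lattice-isometric copies of $\ell_\infty^2$.

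\textbf{What the paper's route buys.} The host has the optimal modulus $\big(\frac{4a}{2+d}\wedge 1\big)d$, which by Theorem~C is the largest value any non-super-reflexive space can have. By contrast, the proof of clause (i) only gives $U(d;a)\ge ad$ for $a<1$.
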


Pe{\l}czy\'nski's universal basis space $P$ \cite{Pelczynski:1971} has a~basis with the property that each space with a~basis embeds as a complemented subspace thereof. In particular, $P$ is isomorphic to $L_1(P)$. We may thus record the following corollary.\smallskip

\begin{corollary}
    The universal basis space $P$ can be equivalently renormed to have uniform property $(S)$.
\end{corollary}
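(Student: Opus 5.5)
The argument will be a transfer along an isomorphism. Put $P$ for Pełczyński's universal basis space. As recalled just before the statement, $P$ is isomorphic to $L_1(P)$, and here $L_1(P)$ means $L_1([0,1],P)$: the Bochner space over Lebesgue measure on $[0,1]$, which is atomless. Fix an isomorphism $T\colon P\to L_1([0,1],P)$. We give $P$ the pulled-back norm
\[
\vertiii{x} := \|Tx\|_{L_1([0,1],P)}\qquad (x\in P).
\]
Because $T$ is an isomorphism, this norm is equivalent to the original norm of $P$. Moreover $T$ becomes a surjective linear isometry from $(P,\vertiii{\cdot})$ onto $L_1([0,1],P)$.

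The next step is to check that uniform property $(S)$ is an isometric invariant. A surjective linear isometry $T$ maps the unit sphere onto the unit sphere and the ball of radius $a$ onto the ball of radius $a$. It also preserves $\|x-y\|$ and the quantity $\big|\|x+z\|-\|y+z\|\big|$. Hence $U(x,y;a)=U(Tx,Ty;a)$ for all $x,y$ and $a$. Taking the infimum in \eqref{U(b:a)} gives $U_{(P,\vertiii{\cdot})}(d;a)=U_{L_1([0,1],P)}(d;a)$ for all $0<d<2$ and $a>0$.

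By Theorem~\ref{ThA}\eqref{ThAi} with $p=1$ and $X=P$, the right-hand side is strictly positive. So $(P,\vertiii{\cdot})$ has uniform property $(S)$, which proves the corollary.

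The main point needing care is the isomorphism $P\simeq L_1([0,1],P)$ itself. Write $L_1(P)$ for $L_1([0,1],P)$. The argument is Pełczyński decomposition. First, $L_1(P)$ has a basis: take the Haar system tensored with the basis of $P$. Hence $L_1(P)$ is complemented in $P$. Conversely, $P$ is complemented in $L_1(P)$ via the constant functions together with the averaging projection. Finally, $P\simeq(\bigoplus P)_{\ell_1}$ and $L_1(P)$ is isomorphic to its own square, so the decomposition method applies. Since the paper cites this isomorphism as known, we take it as given and the proof reduces to the two observations above.
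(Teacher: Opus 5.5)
Your proof is correct and follows the paper's route. The paper notes $P\simeq L_1(P)$, which comes from the complemented universality of $P$, then applies Theorem~\ref{ThA}\eqref{ThAi} and transports the norm along the isomorphism exactly as you do. Your Pe{\l}czy\'nski-decomposition sketch of $P\simeq L_1([0,1],P)$ only fills in a detail the paper leaves implicit.
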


By applying Theorem~\ref{ThA}, we can positively answer the question left open in \cite{KaniaKochanek:2017} regarding the existence (within ZFC) of spaces with property $(S)$ that do not have equivalent strictly convex norms. Although \cite[Theorem B]{KaniaKochanek:2017} already provided such an example, it was based on the assumption that the continuum is a real-measurable cardinal number. Since not every Banach space admits a strictly convex equivalent renorming (\emph{e.g.}, $\ell_\infty(\Gamma)$ for an uncountable set $\Gamma$  or $\ell_\infty / c_0$ as proved by Day \cite{day} and Bourgain \cite{bourgain}, respectively), the corresponding Bochner spaces possess the desired property since the property of having an equivalent strictly convex norm is preserved in closed subspaces. \smallskip

The proof of Theorem~\ref{ThA} yields a suboptimal bound for $U_{L_1}(d, a)$ (\emph{i.e.}, when $X$ is one-dimensional). Theorem~\ref{L1} determines the exact value of $U_{L_1}(d; a)$ for all $d \leqslant 2$ and all $a > 0$.

\begin{mainth}\label{L1} 
    Let $\mu$ be an atomless measure. The space $L_1(\mu)$ has uniform property $(S)$ with
        \begin{equation}\label{UL1}
            U_{L_1(\mu)}(d;a) =  \Big( \frac{4a}{2+d} \wedge 1 \Big) d\qquad (0<d \leqslant 2,\; a > 0).
        \end{equation}
\end{mainth}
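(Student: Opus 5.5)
The identity splits into a lower bound $U_{L_1}(x,y;a)\ge\big(\frac{4a}{2+d}\wedge1\big)d$ for all $x,y\in S_{L_1(\mu)}$ with $\|x-y\|\ge d$, and a matching upper bound obtained from one explicit pair $x,y$. That pair is built from indicator functions, so I will need $\mu$ atomless there, and I assume it is finite or at least has sets of every small measure. The ceiling $d$ is free in one direction, since $\big|\|x+z\|-\|y+z\|\big|\le\|x-y\|$. For pairs with $\|x-y\|>d$ I first reduce to $\|x-y\|=d$. Either the lower bound is proved directly as a function of $d'=\|x-y\|$, which is increasing in $d'$, or $y$ is replaced by a nearby unit vector and the stability of $U(x,y;a)$ noted in the introduction is used.

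\textbf{Pointwise reformulation.} Write $\|x+z\|-\|y+z\|=\int\big(|x(t)+z(t)|-|y(t)+z(t)|\big)\,d\mu(t)$. The scalar function $\phi_t(s)=|x(t)+s|-|y(t)+s|$ is piecewise linear and bounded by $|x(t)-y(t)|$. Equality holds exactly when $y(t)+s$ lies between $0$ and $x(t)+s$. Since $\int|x|=\int|y|=1$, we have $\phi_t(0)$ integrating to $0$. The problem therefore becomes a knapsack-type optimisation: maximise $\int\phi_t(z(t))\,d\mu$ subject to $\int|z|\le a$. At each point the gain per unit of $|z|$ is governed by the slopes of $\phi_t$. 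Slopes equal $\pm2$ where $0$ lies between $x(t)+s$ and $y(t)+s$, and $0$ elsewhere.

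\textbf{Lower bound.} By the symmetry $(x,y,z)\mapsto(y,x,z)$ I am free to choose the sign of the difference. I split the support into the sets $P=\{|x|>|y|\}$ and $N=\{|x|<|y|\}$, where $\phi_t(0)$ is respectively positive and negative. On $N$ I push $x+z$ and $y+z$ through zero appropriately, gaining at slope $2$ per unit of mass, or I kill $y$ outright. Concretely, I test two candidates: $z=-\theta\, y\mathbf 1_{N}$-type perturbations, and convex combinations $z=\lambda\,(x-y)$ scaled as in the uniform-convexity Remark. Optimising over $\lambda$ with the constraints $\int_N(|y|-|x|)=\int_P(|x|-|y|)$ and $\int|x-y|=d$ should give the value $\frac{4a}{2+d}d$, which saturates at $d$ once $a\ge\frac{2+d}{4}$. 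The factor $\frac{2}{2+d}$ should arise because the part of $y$ on which $x$ and $y$ ``agree'' has mass at most $1-d/2$ and must be paid for in the budget $a$. I expect a short averaging argument to handle the worst distribution of mass: the gain is concave in the budget allocated to each level set, so a Jensen-type step reduces to the two-valued case.

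\textbf{Upper bound.} This is the step I expect to be the main obstacle. I take $A,B,C$ disjoint with $\mu(A)=1-d/2$ and $\mu(B)=\mu(C)=d/2$, and set $x=\mathbf 1_{A\cup B}$, $y=\mathbf 1_{A\cup C}$, which possibly need rescaling to make all values equal for symmetry. Then $\|x-y\|=d$. I must show $\int\phi_t(z)\le\frac{4a}{2+d}d$ for every $\|z\|\le a$. Because $\phi_t$ depends only on which of $A,B,C$ contains $t$, Jensen's inequality (concavity of $\phi$ on the relevant ranges, after symmetrising) lets me average $z$ to be constant on each of $A,B,C$. This reduces the problem to a three-variable piecewise-linear program, which I solve by checking vertices. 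If this example fails to be extremal, my fallback is to use values other than $1$ on $A$, keeping the same three-piece structure and optimising the heights.
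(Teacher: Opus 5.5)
\textbf{Upper bound.} Your witness pair is not extremal. Take $x=\mathbf 1_{A\cup B}$, $y=\mathbf 1_{A\cup C}$ and $a\le d/2$. Pick $B'\subseteq B$ with $\mu(B')=a$ and put $z=-\mathbf 1_{B'}$. Then $\|x+z\|=1-a$ and $\|y+z\|=1+a$, so $U(x,y;a)\ge 2a>\frac{4ad}{2+d}$ for every $d<2$.

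Changing the height on $A$ cannot help. There $x=y$, while $y$ still vanishes on $B$ (and $x$ on $C$), and that is exactly what lets every unit of $z$ earn the maximal slope $2$.

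The Jensen reduction is also unsound. The map $s\mapsto |x(t)+s|-|y(t)+s|$ is neither convex nor concave. In the genuinely extremal configurations the gain at a point is a convex threshold function of $|z(t)|$, so optimal perturbations concentrate on subsets. Averaging $z$ over a piece then loses gain, which is the wrong direction for an upper bound.

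The extremal pair must have both functions non-zero where they differ: $x=(1+\frac d2)\mathbf 1_E+(1-\frac d2)\mathbf 1_{E^c}$ and $y=(1-\frac d2)\mathbf 1_E+(1+\frac d2)\mathbf 1_{E^c}$, with $\mu(E)=\mu(E^c)=\frac12$. These lie on the segment of $S_{L_1}$ joining $v=2\mathbf 1_E$ and $w=2\mathbf 1_{E^c}$, with $y=tx+(1-t)w$ and $t=\frac{2-d}{2+d}$. The paper's argument (Theorem~\ref{ThmC}) is then pure convexity: $\|y+z\|\le t\|x+z\|+(1-t)(1+a)\le \|x+z\|+2a(1-t)=\|x+z\|+\frac{4ad}{2+d}$, and symmetrically.

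\textbf{Lower bound.} This part is only a hope, and the concrete candidates you name fail.
\begin{itemize}
\item If $x,y\ge 0$ and $z=\lambda(x-y)$ keeps $x+z,y+z\ge0$, then $\|x+z\|=\|y+z\|=1$ because $\int(x-y)=0$. In the example above this gives $\|x+z\|=\|y+z\|$ for every such $z$ with $\|z\|\le 1-\frac d2$, so the uniform-convexity Remark has no $L_1$ analogue.
\item Scaling $y$ uniformly on $N$ gains only where $\theta|y|>|x|$, which in that example forces $a>\frac{2-d}{4}$.
\item The concavity you invoke fails pointwise. Where $x(t)>y(t)>0$, the perturbation $s=-r$ with $r\ge0$ gains $2(r-y(t))_+\wedge 2(x(t)-y(t))$, which is convex in $r$ on $[0,x(t)]$.
\end{itemize}

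The missing idea is to annihilate the dominant function completely on a well-chosen subset. After reducing to $x,y\ge0$ (the set $[xy<0]$ needs separate treatment, as in the paper's first reduction), let $P=[x>y]$ and $N=[y>x]$.
\begin{itemize}
\item For $F\subseteq P$ and $z=-x\mathbf 1_F$ one has $\|y+z\|-\|x+z\|=2\int_F(x-y)$, at cost $\int_F x$.
\item Killing $x$ on all of $P$ costs $\int_P x$ and gains $2\int_P(x-y)=d$.
\item By atomlessness, a budget $a\le \int_P x$ buys a gain of at least $ad/\int_P x$: choose $F$ where $y/x$ is smallest. This is the paper's monotone rearrangement and its estimate $\int_0^t g\le \frac tb\int_0^b g$. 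The same holds on $N$.
\end{itemize}
The quantitative key, absent from your sketch, is $\int_P x+\int_N y\le\int(x\vee y)=1+\frac d2$. Hence $\min\{\int_P x,\int_N y\}\le\frac{2+d}{4}$, and the gain is at least $(\frac{4a}{2+d}\wedge 1)d$. This is exactly the paper's $C=\frac{2+d}{2}$ combined with $b\wedge(1-b)\le\frac12$.
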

\begin{remark}
Let $p\in [1,\infty)$ and let $\mu$ be an atomless measure. Since every separable subspace of $L_p(\mu)$ (hence every two-dimensional subspace thereof) is contained in a subspace that is isometrically isomorphic to $L_p[0,1]$, we have
\[
    U_{L_p(\mu)}(d; a) = U_{L_p[0,1]}(d; a)\qquad (0<d \leqslant 2,\; a > 0).
\]
\end{remark}

\begin{theorembprime}\label{thm:diffuse-tracial-predual-uniformS}
Let $(\mathcal M,\tau)$ be a diffuse finite (i.e.,\ tracial) von Neumann algebra with a faithful normalised trace $\tau(1)=1$. Identify the predual $\mathcal M_*$ with the non-commutative $L_1$-space $L_1(\mathcal M,\tau)$ via $\varphi_x(a)=\tau(ax)$. Then $\mathcal M_*$ has uniform property {\rm(S)}, and in fact
\[
  U_{\mathcal M_*}(d;a)=\Big(\frac{4a}{2+d}\wedge 1\Big)d\qquad(0<d\le 2,\ a>0).
\]
\end{theorembprime}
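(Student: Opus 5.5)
We establish the two inequalities separately, with Theorem~\ref{L1} as the model and the commutative case as the source of the extremal examples.

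\emph{Upper bound.} Since $\mathcal M$ is diffuse, it contains a diffuse abelian von Neumann subalgebra $\mathcal A\cong L_\infty[0,1]$, for instance a maximal abelian subalgebra, which is diffuse because $\mathcal M$ is. The trace-preserving conditional expectation $E\colon \mathcal M\to\mathcal A$ extends to a contractive projection on $L_1(\mathcal M,\tau)$, and this projection maps onto $L_1(\mathcal A,\tau)\cong L_1[0,1]$. Let $x,y\in S_{L_1(\mathcal A)}$ be the extremal pair from the proof of Theorem~\ref{L1}, i.e.\ a pair with $\|x-y\|\ge d$ and $U_{L_1}(x,y;a)$ close to $\big(\tfrac{4a}{2+d}\wedge1\big)d$. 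For arbitrary $z\in L_1(\mathcal M)$ with $\|z\|_1\le a$ we must bound $\big|\|x+z\|_1-\|y+z\|_1\big|$. Applying $E$ does not directly compare the two norms. Instead we plan to use the concrete structure of the $L_1$ extremisers, which are built from indicator-type functions: $x=f+g$ and $y=f+h$, with $g$ and $h$ supported on disjoint projections $p$ and $q$ of $\mathcal A$. We then estimate $\|x+z\|_1$ by compressing $z$ to the corners $p\mathcal M p$, $q\mathcal M q$ and $(1-p-q)\mathcal M(1-p-q)$, using the pinching inequality $\sum_i\|p_iwp_i\|_1\le\|w\|_1$ together with the triangle inequality. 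This reduces the estimate to the one-dimensional computation already done in Theorem~\ref{L1}. If that reduction fails, the fallback is to redo the commutative argument using only the properties it actually needs: pinching, $\|w\|_1\ge|\tau(w)|$, and $\|w\|_1=\tau(w)$ for $w\ge0$.

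\emph{Lower bound.} Fix $x,y\in S_{\mathcal M_*}$ with $\|x-y\|_1\ge d$. The main obstacle is that $x$ and $y$ need not commute, so we cannot place them in a common abelian subalgebra. We therefore aim to prove the lower bound directly. Write $x-y=v|x-y|$ by polar decomposition. Use diffuseness to split $\tau$-mass, i.e.\ choose projections $e\le s(|x-y|)$ of prescribed trace that commute with $|x-y|$; such spectral subprojections exist because $\mathcal M$ is diffuse. Then take test vectors of the form $z=-t\,v e|x-y|$ or $z=t\cdot(\text{partial isometry})$. For a lower bound on $\|y+z\|_1-\|x+z\|_1$ it suffices to produce a unitary or contraction $u\in\mathcal M$ with $\tau(u(y+z))\ge\|y+z\|_1$-type estimates. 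Concretely, we use $\|w\|_1\ge\operatorname{Re}\tau(uw)$ for $\|u\|\le1$, applied to $y+z$ with $u$ chosen from the polar decompositions of $x$ and $x-y$, and we use the exact value $\|x+z\|_1$ for the chosen $z$. This replicates the commutative proof, in which one removes mass $a$ from where $x$ dominates $y$ and adds it elsewhere.

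The step we expect to be hardest is obtaining the sharp constant $\tfrac{4a}{2+d}$ in the noncommutative lower bound. If a direct argument proves too delicate, we will try to reduce to the commutative case as follows. By Lemma~\ref{lem:p-vs-1}-style stability, $U$ is continuous in $x$ and $y$, so we may approximate $x$ and $y$ by elements of a finite-dimensional matrix-type subalgebra tensored with a diffuse abelian algebra. We then apply the commutative theorem fibrewise in $L_1(\mathcal A,\operatorname{tr}_n\otimes\tau)$, a Bochner space $L_1(\mu,S_1^n)$, combining this with the exact $L_1$-modulus of Theorem~\ref{L1} transferred through a conditional expectation.
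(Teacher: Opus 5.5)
There is a genuine gap in your lower bound, and it cannot be closed: over the complex scalars, which is the setting of every von Neumann algebra predual, the lower bound in the statement is false. Take any $e\in S_{\mathcal M_*}$ (e.g.\ $e=1$), fix $\phi\in(0,\pi)$, and put $x=e$, $y=e^{i\phi}e$. Then $x,y$ are unit vectors with $\|x-y\|_1=|1-e^{i\phi}|=:d\in(0,2)$. For every $z\in\mathcal M_*$,
\[
x+z=e^{-i\phi}(y+z)+(1-e^{-i\phi})z,\qquad y+z=e^{i\phi}(x+z)+(1-e^{i\phi})z,
\]
so the triangle inequality gives $\bigl|\|x+z\|_1-\|y+z\|_1\bigr|\le d\|z\|_1$. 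Hence, for every $a\in(0,1)$,
\[
U_{\mathcal M_*}(d;a)\le U(x,y;a)\le ad<\Big(\frac{4a}{2+d}\wedge1\Big)d .
\]
Only the norm is used, so regarding $\mathcal M_*$ as a real space does not help.

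This already happens for the abelian algebra $\mathcal M=L_\infty[0,1]$. The proof of Theorem~\ref{L1} relies on real sign structure (sign changes, the sets $[f\cdot g\le0]$ and $[f\ge g]$), and its conclusion fails for complex $L_1$. So ``replicating the commutative proof'' with polar decompositions and contractions $u$ cannot produce the constant $\frac{4a}{2+d}$. Your fallback through $M_n\otimes L_\infty$ approximations meets the same obstruction, and it is not available for non-hyperfinite $\mathcal M$ anyway.

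The paper's own route breaks at the same point. After normalising $x-y\ge0$ and passing to an abelian $\mathcal A$ containing $x-y$, it applies Theorem~\ref{L1} to $E_{\mathcal A}(x)$ and $E_{\mathcal A}(y)$. These are complex-valued and need not have norm one; in the example above they are non-real constants, and $\mathcal A=\mathbb C1$ is not even diffuse. Moreover, the proof of Lemma~\ref{lem:lift-dual} obtains its inequality by subtracting two lower bounds, which does not bound a difference from below.

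The upper bound is true, but pinching combined with the triangle inequality, as you describe it, does not deliver the sharp constant. Let $D(w)=\sum_i p_iwp_i$ and let $x$ be block-diagonal. Combining $\|x+z\|_1\ge\|x+D(z)\|_1$ with $\|y+z\|_1\le\|y+D(z)\|_1+\|z-D(z)\|_1$ leaves an off-diagonal error $\|z-D(z)\|_1$. This error can be as large as $\|z\|_1$, far above $\frac{4ad}{2+d}$ when $d$ is small.

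What works is the convexity argument in the proof of Theorem~\ref{ThmC}, which needs only a segment of length $2$ in the unit sphere. In $\mathcal M_*$ take $[2p,2q]$ with $p,q$ orthogonal projections of trace $\tfrac12$, which exist by diffuseness. This segment lies in the sphere because positive elements of trace one have norm one. The rotation identity above even gives the stronger bound $U_{\mathcal M_*}(d;a)\le(a\wedge1)d$. So the most one could hope to prove for $\mathcal M_*$ is uniform property~$(S)$ with a modulus not exceeding $(a\wedge1)d$.
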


The following elementary observation will be used to show that uniform property~$(S)$ need
not pass to biduals.

\begin{lemma}\label{lem:L-summand-obstruction}
Let $Z=Y\oplus_1 W$. If $x,y\in S_Y$, then
\[
    U_Z((x,0),(y,0);a)=U_Y(x,y;a)\qquad(a>0).
\]
Consequently, if an $L$-summand of a Banach space fails uniform property~$(S)$, then the
whole space fails uniform property~$(S)$. If the $L$-summand fails property~$(S)$, then so
does the whole space.
\end{lemma}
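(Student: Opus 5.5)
We prove the identity $U_Z((x,0),(y,0);a)=U_Y(x,y;a)$ by two inequalities, and then read off the two consequences.

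\emph{Lower bound.} Restrict the supremum defining $U_Z$ to perturbations of the form $(u,0)$ with $u\in Y$ and $\|u\|\le a$. For these,
\[
\|(x,0)+(u,0)\|=\|x+u\| \quad\text{and}\quad \|(y,0)+(u,0)\|=\|y+u\|.
\]
Hence $U_Z((x,0),(y,0);a)\ge U_Y(x,y;a)$.

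\emph{Upper bound.} Take an arbitrary $z=(u,w)\in Z$ with $\|z\|=\|u\|+\|w\|\le a$. Because the sum is an $\ell_1$-sum,
\[
\|(x,0)+z\|-\|(y,0)+z\| = \big(\|x+u\|+\|w\|\big)-\big(\|y+u\|+\|w\|\big)=\|x+u\|-\|y+u\|.
\]
So the $W$-component cancels exactly. Since $\|u\|\le a-\|w\|\le a$, the absolute value of this difference is at most $U_Y(x,y;a)$. Taking the supremum over $z$ gives the reverse inequality, and hence equality. This cancellation is the key step, and it is immediate.

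\emph{Consequences.} The embedding $x\mapsto(x,0)$ maps $S_Y$ isometrically into $S_Z$ and preserves distances, so $\|(x,0)-(y,0)\|=\|x-y\|$. The infimum defining $U_Z(d;a)$ therefore runs over a set containing all such pairs, and so
\[
U_Z(d;a)\le U_Y(d;a).
\]
Suppose the $L$-summand $Y$ fails uniform $(S)$, i.e. $U_Y(d;a)=0$ for some $d$ and $a$. Then $U_Z(d;a)=0$ as well; this step uses that $Z$ is isometric to $Y\oplus_1 W$.

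For property $(S)$, suppose there are distinct $x,y\in S_Y$ and some $a>0$ such that $\|x+u\|=\|y+u\|$ for every $u\in Y$ with $\|u\|<a$. By the computation in the upper bound, the same equality then holds for all $z\in Z$ with $\|z\|<a$. So the distinct unit vectors $(x,0),(y,0)$ witness that $Z$ fails $(S)$. No step here presents a real obstacle; the only point needing care is the bookkeeping $\|u\|\le a-\|w\|\le a$.
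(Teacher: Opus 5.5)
Your proof is correct and follows the paper's argument. You get the lower bound by restricting to perturbations $(u,0)$, the upper bound from the exact cancellation of $\|w\|$ in the $\ell_1$-sum, and the consequences from the isometric embedding $x\mapsto(x,0)$; you also spell out the strict-inequality bookkeeping ($\|u\|\le\|z\|<a$) for the property~$(S)$ clause, which the paper only says follows ``at once''.
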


\begin{proof}
For $z=(u,w)\in Z$ with $\|z\|=\|u\|+\|w\|\le a$, we have
\[
\begin{split}
&\big|\|(x,0)+z\|-\|(y,0)+z\|\big| \\
&\quad=\big|\|x+u\|+\|w\|-\|y+u\|-\|w\|\big|
     =\big|\|x+u\|-\|y+u\|\big|
     \le U_Y(x,y;a).
\end{split}
\]
Taking the supremum over all such $z$ gives
$U_Z((x,0),(y,0);a)\le U_Y(x,y;a)$. The reverse inequality follows by restricting to
vectors of the form $(u,0)$. The last assertions follow at once after representing a
Banach space with an $L$-summand $Y$ as $Y\oplus_1 W$.
\end{proof}

\begin{corollary}\label{cor:L1-bidual-counterexample}
Uniform property~$(S)$ is not inherited by biduals. More precisely, $L_1[0,1]$ has uniform
property~$(S)$, whereas $L_1[0,1]^{**}$ fails even property~$(S)$.
\end{corollary}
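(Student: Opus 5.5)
The plan is to combine Theorem~\ref{L1} for the positive half with Lemma~\ref{lem:L-summand-obstruction} for the negative half. Theorem~\ref{L1}, applied with Lebesgue measure on $[0,1]$ (which is atomless), shows that $L_1[0,1]$ has uniform property~$(S)$. For the bidual, I will use the classical fact that $L_1[0,1]^{**}$ is an abstract $L$-space. Since it is the dual of the AM-space $L_\infty[0,1]$, it is a $C(K)^*$-space: $L_1[0,1]^{**}\cong L_\infty[0,1]^*\cong M(K)$ isometrically, where $K$ is the (hyperstonean) Gelfand spectrum of $L_\infty[0,1]$. In $M(K)$, every band is an $L$-summand. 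In particular, for any point $k\in K$, the band of measures carried by $\{k\}$ is the one-dimensional space spanned by $\delta_k$, and
\[
M(K)=\mathrm{span}\{\delta_k\}\oplus_1\{\mu:\mu(\{k\})=0\}.
\]

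A one-dimensional $L$-summand alone is useless, because its unit sphere $\{\pm\delta_k\}$ carries $(S)$ trivially. What I need instead is an $L$-summand failing property $(S)$. The natural candidate is $\ell_1^2$, the span of two atoms. It fails $(S)$: take $x=(1,0)$ and $y=(0,1)$. For $z=(u,v)$ with $|u|+|v|<1$ we get
\[
\|x+z\|=1+u+|v|,\qquad \|y+z\|=|u|+1+v.
\]
These are not equal in general (for instance $u>0$, $v=0$ gives $1+u$ versus $1+u$, which happen to agree). So I must check this more carefully, and I expect to switch to a genuinely failing example. In $\ell_1^2$ one can instead take $x=(1/2,1/2)$ and $y=(1,0)$: with $|u|,|v|$ small,
\[
\|x+z\|=1+u+v,\qquad \|y+z\|=1+u+|v|,
\]
and these differ when $v<0$. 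So $\ell_1^2$ has $(S)$ (it is two-dimensional and known to have it), and I must look elsewhere.

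The better route is therefore this. Since $M(K)=L_1(\nu)$ for a measure $\nu$ with atoms (the point masses at isolated points, if any) and a diffuse part, the obstruction must come from the structure Kochanek--Kania identified for $(S)$ in $\ell_1(\Gamma)$-type sums: an $\ell_1(\Gamma)$ summand with $\Gamma$ large fails $(S)$. The key step is to exhibit an $L$-summand of $L_1[0,1]^{**}$ isometric to $\ell_1(\Gamma)$ for suitably large $\Gamma$, or to $L_1$ of a non-separable measure, and then quote the failure of $(S)$ for it from \cite{KaniaKochanek:2017}. This is where the real-measurable assumption of \cite[Theorem B]{KaniaKochanek:2017} lived. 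Concretely, I will decompose $L_1[0,1]^{**}=L_1[0,1]\oplus_1 L_1[0,1]^{\perp\text{-singular}}$, where the singular part consists of the purely finitely additive functionals. I will then show that the singular part contains an $L$-summand failing $(S)$, using that the singular part is $L_1$ of a measure whose measure algebra is enormous (density $2^{\mathfrak c}$).

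The main obstacle is identifying precisely which $L$-summand fails $(S)$ and verifying the failure in ZFC. I would first try to show that $M(K)$ has $(S)$-failing pairs via two measures with disjoint supports, $x=\mu_1$ and $y=\mu_2$. For a perturbation $z$, we have $\|x+z\|-\|y+z\|=\|\mu_1+z_1\|-\|z_1\|-\|\mu_2+z_2\|+\|z_2\|$, where $z_i$ is the restriction of $z$ to $\operatorname{supp}\mu_i$. The aim is to show that this is forced to vanish for all small $z$ when the supports are "sufficiently non-separable"; the failure would then follow from Lemma~\ref{lem:L-summand-obstruction}.
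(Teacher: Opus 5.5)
Your positive half is fine, and your first instinct, an $L$-summand of $M(K)\cong L_\infty[0,1]^*$ spanned by Dirac masses combined with Lemma~\ref{lem:L-summand-obstruction}, is exactly the paper's route. The gap is that you reject it on the basis of a false claim. $\ell_1^2$ does \emph{not} have property $(S)$: in dimension two, $(S)$ is equivalent to strict convexity, as recalled in the introduction, and $\ell_1^2$ is not strictly convex. Both of your test pairs contain a vertex of the unit ball, which is why you found witnesses. Instead take, for distinct $s,t\in K$, two points in the relative interior of the same edge, e.g.\ $x=\tfrac12\delta_s+\tfrac12\delta_t$ and $y=\tfrac14\delta_s+\tfrac34\delta_t$. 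If $z=u\delta_s+v\delta_t$ with $|u|+|v|\le\tfrac14$, then all coordinates of $x+z$ and $y+z$ stay non-negative, so $\|x+z\|=1+u+v=\|y+z\|$. The band $\mathrm{span}\{\delta_s,\delta_t\}\cong\ell_1^2$ is an $L$-summand of $M(K)$ via $\mu\mapsto\mu(\{s\})\delta_s+\mu(\{t\})\delta_t$, and it fails $(S)$. The lemma then finishes the proof in ZFC. The paper uses the whole atomic part $\overline{\mathrm{span}}\{\delta_k:k\in K\}\cong\ell_1(K)$ with the same pair; nothing about the size of $K$ beyond $|K|\ge 2$ is needed.

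The replacement route you sketch instead cannot succeed. Disjointly supported unit measures $\mu_1,\mu_2$ never witness failure of $(S)$: for $z=-\varepsilon\mu_1$ one gets $\|\mu_1+z\|-\|\mu_2+z\|=(1-\varepsilon)-(1+\varepsilon)=-2\varepsilon\neq0$. Looking for the obstruction in a huge diffuse or singular part is also looking in the wrong place. By Kakutani and Maharam, the band of atomless measures in $M(K)$ is an $L_1(\mu)$ with $\mu$ atomless, and this has uniform $(S)$ by Theorem~\ref{L1}, however large its measure algebra is. The failure comes from atoms, and every $\delta_k$ with $k\in K$ is an atom of $M(K)$, not only those at isolated points; here $K$ has no isolated points at all. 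The real-measurable hypothesis of \cite[Theorem B]{KaniaKochanek:2017} is irrelevant to this statement.
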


\begin{proof}
By Theorem~\ref{L1}, the space $L_1[0,1]$ has uniform property~$(S)$. We prove that its
bidual does not.

Let $K$ be a compact Hausdorff space and identify $C(K)^*$ with the Banach space $M(K)$ of
regular signed measures on $K$. Let
\[
    E_K=\overline{\operatorname{span}}\{\delta_t:t\in K\}\subset C(K)^*.
\]
The map
\[
    (a_t)_{t\in K}\in \ell_1(K)\longmapsto \sum_{t\in K}a_t\delta_t\in E_K
\]
is an onto isometry, so $E_K$ is isometric to $\ell_1(K)$.

We claim that $E_K$ is an $L$-summand of $C(K)^*$. Given $\mu\in M(K)$, the set
$A(\mu)=\{t\in K:\mu(\{t\})\ne0\}$ is countable, and
$\sum_{t\in A(\mu)}|\mu(\{t\})|\le \|\mu\|$. Hence
\[
    P\mu:=\sum_{t\in A(\mu)}\mu(\{t\})\delta_t
\]
defines an element of $E_K$. The measure $\mu_c:=\mu-P\mu$ satisfies
$\mu_c(\{t\})=0$ for every $t\in K$ and, in particular, $|\mu_c|(A(\mu))=0$, whereas
$P\mu$ is supported by $A(\mu)$. Thus $P\mu$ and $\mu_c$ are mutually singular. The assignment $P$ is linear, $P^2=P$, and
\[
    \|\mu\|=\|P\mu\|+\|\mu-P\mu\|.
\]
Consequently $P$ is an $L$-projection and $C(K)^*=E_K\oplus_1\ker P$.

If $K$ contains two distinct points $s$ and $t$, then $E_K$ fails property~$(S)$. Indeed,
inside the isometric copy of $\ell_1(K)$ set
\[
    x={1\over2}\delta_s+{1\over2}\delta_t,
    \qquad
    y={1\over4}\delta_s+{3\over4}\delta_t .
\]
Then $x,y\in S_{E_K}$ and $\|x-y\|=1/2$. If
$z=\sum_{r\in K}z_r\delta_r\in E_K$ and $\|z\|\le 1/4$, then the coefficients of
$x+z$ and $y+z$ at $s$ and $t$ are still non-negative, and therefore
\[
    \|x+z\|=1+z_s+z_t+\sum_{r\in K\setminus\{s,t\}}|z_r|=
    \|y+z\|.
\]
Thus $U_{E_K}(x,y;1/4)=0$, so $E_K$ fails property~$(S)$ and hence also uniform
property~$(S)$. By Lemma~\ref{lem:L-summand-obstruction}, $C(K)^*$ fails property~$(S)$.

Finally, by the Kakutani--Gelfand representation theorem, $L_\infty[0,1]$ is isometric to
$C(K)$ for a compact Hausdorff space $K$ with at least two points. Hence
\[
    L_1[0,1]^{**}=L_\infty[0,1]^*\cong C(K)^*,
\]
and the preceding paragraph shows that $L_1[0,1]^{**}$ fails property~$(S)$.
\end{proof}

 We remark in passing that if $X$ is a space isometrically universal for all 2-dimensional Banach spaces, then $U_G(d; a) \geqslant U_X(d;a)$ ($a> 0, d\in (0,2)$).
\begin{mainth}\label{ThmC}
    Suppose that $X$ is a Banach space that is not super-reflexive. Then
    \[
        U_X(d;a)\leqslant \Big( \frac{4a}{2+d} \wedge 1 \Big) d\qquad (0<d \leqslant 2,\; a > 0).
    \]
\end{mainth}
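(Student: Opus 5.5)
Since $X$ is not super-reflexive, James' theorem (or Enflo--Pisier) tells us that $\ell_1^n$ is finitely representable in $X$ for every $n$: for each $n$ and $\varepsilon>0$ there is a subspace $E\subset X$ and an isomorphism $T\colon\ell_1^n\to E$ with $\|T\|\,\|T^{-1}\|\le 1+\varepsilon$. The plan is to take the extremal pair $x,y$ that realises the upper bound $\big(\frac{4a}{2+d}\wedge1\big)d$ in the proof of Theorem~\ref{L1} for $L_1$, discretise it so that it lives in some $\ell_1^n$, and transport it into $X$.

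The main obstacle is that the supremum defining $U_X(x,y;a)$ runs over all $z$ in the ball of $X$, not only over $z\in E$. Almost-isometric copies of $\ell_1^n$ alone therefore do not control $\|x+z\|$ for $z\notin E$.

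To get around this I would work in an ultrapower. By finite representability and a standard ultraproduct argument, $L_1[0,1]$ embeds isometrically into some ultrapower $X^{\mathcal U}$. The problem of non-local $z$ persists there, so what I really want is an $L$-summand. If $L_1$ sat as an $L$-summand inside $X^{\mathcal U}$, then Lemma~\ref{lem:L-summand-obstruction} would give
\[
U_{X^{\mathcal U}}(x,y;a)=U_{L_1}(x,y;a),
\]
and the stability of the modulus under ultrapowers, $U_X(d;a)=U_{X^{\mathcal U}}(d;a)$, would finish the proof. This is too much to expect in general.

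Instead I would use an $\ell_1$-spreading model or asymptotic structure. By James' characterisation, non-super-reflexivity yields, for every $n$ and $\varepsilon$, vectors $e_1,\dots,e_n$ that are $(1+\varepsilon)$-equivalent to the $\ell_1^n$ basis. More usefully, the \emph{uniform} non-squareness failure gives, in the ultrapower, vectors $u,v$ of norm one with $\|u\pm v\|=2$. The key idea is that the $L_1$ extremal pair can be chosen as a two-point configuration. In $\ell_1$-type geometry it should be expressible as
\[
x=\alpha u+\beta v,\qquad y=\alpha' u+\beta' v,
\]
with nonnegative coefficients, where the bound comes purely from the triangle inequality together with the following fact: for any $z$, $\|x+z\|$ and $\|y+z\|$ differ by at most the quantity computed in the $\ell_1$ case. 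Compare the $E_K$ computation in Corollary~\ref{cor:L1-bidual-counterexample}, where positivity of coefficients made the norms equal.

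Concretely, I would aim to prove an upper estimate valid in \emph{every} normed space for the specific pair. For $x,y$ lying on a common face of the sphere, that is, with a norming functional $f$ satisfying $f(x)=f(y)=1$ and, more strongly, with $\|\lambda x+(1-\lambda)y\|=1$ along the segment, the convexity of $t\mapsto\|x+t(y-x)+z\|$ bounds $\big|\|x+z\|-\|y+z\|\big|$ in terms of $\|z\|$ and $d$. For this I would use the convexity argument from the Remark on uniform convexity run in reverse: extending the segment beyond $x$ and $y$ up to distance about $a$ and comparing slopes gives a difference of at most $\frac{4a}{2+d}\,d$, and trivially at most $d$. A long flat segment of length $d+2a$ on the sphere is available in $X^{\mathcal U}$ whenever $X$ is not uniformly non-square, and in particular whenever it is not super-reflexive. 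This slope-comparison step, namely checking that the convexity bound indeed yields exactly $\frac{4a}{2+d}d$, is where I expect the real work to lie.
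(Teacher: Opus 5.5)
There is a genuine gap. Your final strategy has the right skeleton, and it is the paper's. A non-super-reflexive space is not uniformly non-square (James). So an ultrapower $X^{\mathcal U}$ contains unit vectors $u,v$ with $\|u\pm v\|=2$, and the whole segment $\overline{uv}$, of length $2$, lies in the unit sphere. One then argues by convexity and transfers back using stability of the modulus under ultrapowers.

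Drop the earlier claims, because they are false. Non-super-reflexivity does not make $\ell_1^n$ finitely representable for all $n$; James constructed non-reflexive spaces of type $2$. So neither $L_1$ inside an ultrapower nor $(1+\varepsilon)$-copies of the $\ell_1^n$ basis for $n\ge 3$ are available. Only $n=2$ is, and that suffices.

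The gap is the step you defer, which is the entire quantitative content of the theorem. The set-up you propose for it does not produce the stated constant. Suppose $x,y,w$ lie in this order on a segment contained in the unit sphere, with $\|x-y\|=d$ and $\|w-y\|=e$. Then $y=tx+(1-t)w$ with $1-t=d/(d+e)$, and for every $z$ with $\|z\|\le a$,
\[
\|y+z\|-\|x+z\|\le(1-t)\bigl(\|w+z\|-\|x+z\|\bigr)\le(1-t)\bigl((1+a)-(1-a)\bigr)=\frac{2ad}{d+e}.
\]
This is all that convexity plus $1-a\le\|p+z\|\le 1+a$ on the segment can give. With your choice $e\approx a$ (a segment of length $d+2a$), the bound is $2ad/(d+a)$. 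That is strictly larger than $4ad/(2+d)$ whenever $a<1-d/2$, and it is of order $a$ rather than $ad$ when $a\ll d\ll 1$. When $d+2a>2$, such a segment cannot lie on the sphere at all.

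The intuition that the segment must extend by about $a$ to accommodate $z$ is misplaced. The vector $z$ is arbitrary, and only $\|w+z\|\le 1+a$ and $\|x+z\|\ge 1-a$ are used. So the extensions beyond $x$ and beyond $y$ (both are needed, to handle either sign of the difference) should be as large as possible, which forces a symmetric placement on the full segment.

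Place $x$ and $y$ on $\overline{uv}$ at distance $1-d/2$ from $u$ and $v$ respectively. This gives $\|x-y\|=d$, $w=v$, $e=1-d/2$, $t=\frac{2-d}{2+d}$ and $1-t=\frac{2d}{2+d}$, hence exactly the bound $\frac{4a}{2+d}d$. The case $\|y+z\|\le\|x+z\|$ is symmetric, via $x=ty+(1-t)u$. Combined with the trivial bound $\bigl|\|x+z\|-\|y+z\|\bigr|\le d$, this is the paper's proof.
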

We use Theorem~\ref{L1} and Theorem~\ref{ThmC} to deduce the exact value of $U_G(d; a)$ for a Gurarii space of arbitrary density in Theorem~\ref{ThA}. For $a \leqslant d/2$, the estimate
\[
    U_X(d;a) \geqslant \frac{4a}{d} \delta(d)
\]
is optimal for $p$-convex spaces up to an absolute multiplicative constant, already in dimension 2. This is clear for the two-dimensional Hilbert space. As for $p > 2$, let us consider $X = \ell_p^2$ and take unit vectors $x = (\alpha, \beta)$, $y = (\alpha, -\beta)\in X$ with $\alpha$ and $\beta$ positive and $\| x-y \| = d$. This forces $\beta = d/2$. Maximising 
\[
	\| x + z \|^p - \| y + z \|^p
\]
subject to the constraint $\| z \| \leqslant a$ (where $a \leqslant d/2$) is equivalent to maximisation of $\| x + z \| - \| y + z \|$ subject to the same constraint. The maximum is achieved for $z = (0, a)$ (and $-z$) since $x$ and $y$ have the same first coordinate. So
\[
	\| x + z \|^p - \| y + z \|^p = (d/2 + a)^p - (d/2 - a)^p = 2ap\cdot s(d,a)^{p-1},
\]
where $d/2 - a \leqslant s(d,a) \leqslant d/2 + a \leqslant d$. So
\[
	\| x + z \|^p - \| y + z \|^p \leqslant 2ap d^{p-1},
\]
and the conclusion follows. We may then record the following corollary.
\begin{corollary}
    For $p > 2$, $U_{\ell_p}$ is of order $ad^{p-1}$.
\end{corollary}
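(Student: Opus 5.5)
The claim is that, for $p>2$, small $a$ and $a\leqslant d/2$, there are constants $0<c_p\leqslant C_p$ depending only on $p$ with $c_p\,ad^{p-1}\leqslant U_{\ell_p}(d;a)\leqslant C_p\,ad^{p-1}$. I would prove the two inequalities separately: the lower bound from the Remark on uniform convexity, and the upper bound from the $\ell_p^2$ computation just above, passed through Lemma~\ref{lem:p-vs-1}.

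\emph{Lower bound.} By Clarkson's inequality, $\ell_p$ for $p\geqslant 2$ is uniformly convex with modulus $\delta(d)\geqslant c'_p d^p$; in fact $\delta(d)\geqslant 1-(1-(d/2)^p)^{1/p}\geqslant p^{-1}(d/2)^p$. The Remark gives $U_X(d;a)\geqslant \frac{4a}{d}\delta(d)$ for $0<a\leqslant d/2$. That Remark is stated for pairs with $\|x-y\|=d$. For pairs with $\|x-y\|=d'\geqslant d$ it gives $\frac{4a}{d'}\delta(d')$, provided $a\leqslant d'/2$, which holds since $a\leqslant d/2\leqslant d'/2$. Because $d'\mapsto d'^{p-1}$ is increasing, this is at least $\frac{4a}{d'}c'_p d'^p\geqslant 4c'_p\,ad^{p-1}$. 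Hence $U_{\ell_p}(d;a)\geqslant 4c'_p\,ad^{p-1}$.

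\emph{Upper bound.} I would use the pair $x=(\alpha,\beta,0,\dots)$, $y=(\alpha,-\beta,0,\dots)$ with $\beta=d/2$ and $\alpha=(1-\beta^p)^{1/p}$, so that $\|x-y\|=d$. The point to check is that the computation made in $\ell_p^2$ survives when $z$ ranges over all of $\ell_p$. Write $z=(z_1,z_2,w)$. Then
\[
\|x+z\|^p-\|y+z\|^p=|\beta+z_2|^p-|{-\beta}+z_2|^p,
\]
because the first-coordinate terms and the tail $\|w\|_p^p$ cancel. With $|z_2|\leqslant a\leqslant\beta$ this is maximised in absolute value at $z_2=\pm a$, with value $(d/2+a)^p-(d/2-a)^p\leqslant 2ap\,d^{p-1}$. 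So $U^{(p)}_{\ell_p}(d;a)\leqslant 2ap\,d^{p-1}$, and Lemma~\ref{lem:p-vs-1} then gives
\[
U_{\ell_p}(d;a)\leqslant \frac{2a\,d^{p-1}}{(1-a)^{p-1}}\leqslant 2^p a\,d^{p-1}\qquad (a\leqslant 1/2).
\]

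There is a small subtlety in this reduction. Lemma~\ref{lem:p-vs-1} compares the infima over $d$, whereas here I only control one pair. I would therefore apply its pointwise mean-value inequality to this single pair: the norms $\|x+z\|,\|y+z\|$ lie in $[1-a,1+a]$, so $\big|\|x+z\|-\|y+z\|\big|\leqslant \big|\|x+z\|^p-\|y+z\|^p\big|/(p(1-a)^{p-1})$. This bounds $U(x,y;a)$, and hence $U_{\ell_p}(d;a)\leqslant U(x,y;a)$.

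I expect the only real obstacle to be that cancellation step, which is routine once the tail is written out. Beyond that, the work is bookkeeping of constants and of the range of parameters ($a\leqslant d/2$ and $a<1$) in which the statement ``of order $ad^{p-1}$'' is meant.
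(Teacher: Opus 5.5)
Your proposal is correct and follows essentially the paper's route: the lower bound comes from the uniform-convexity Remark combined with Clarkson's estimate $\delta(d)\geqslant p^{-1}(d/2)^p$, and the upper bound comes from the two-coordinate pair $(\alpha,\pm d/2)$ together with the mean-value comparison of Lemma~\ref{lem:p-vs-1}. The only difference is that you carry out the tail cancellation directly in $\ell_p$ at the level of $p$-th powers and then convert pointwise to first powers, whereas the paper compresses this into the one-line reduction $\ell_p\cong\ell_p^2\oplus_p\ell_p$, $U_{\ell_p}\leqslant U_{\ell_p^2}$; your version makes explicit why that reduction is harmless, since for first powers the tail does not literally cancel.
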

\begin{proof}We just need $U_{\ell_p} \leqslant U_{\ell_p^2}$. This follows from the easy observation that $\ell_p$ is isometric to $\ell_p^2 \oplus_p \ell_p$ so $U_{\ell_p} \leqslant U_{\ell_p^2}$.
\end{proof}
Finally, we prove that $\ell_1$ (and all its ultrapowers) can be equivalently renormed to have uniform property (S).
\begin{mainth}\label{thm:l1renorming}
Let $\Gamma$ be a set. Then the norm 
\[
    \|x\|_S^2 = \|x\|_{\ell_1(\Gamma)}^2 + \|x\|_{\ell_2(\Gamma)}^2\quad\big(x\in \ell_1(\Gamma)\big)
\]
is an equivalent renorming of $\ell_1(\Gamma)$ with uniform property (S).
\end{mainth}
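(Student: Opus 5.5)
Equivalence is immediate: $\|x\|_1\le\|x\|_S\le\sqrt2\|x\|_1$ because $\|x\|_2\le\|x\|_1$. For uniform property $(S)$ I would work with the squared norm and use Lemma~\ref{lem:p-vs-1} with $p=2$. It lets me bound $U^{(2)}$ from below instead of $U$, after first reducing to $a<1$; this reduction is harmless since $U(d;a)$ is nondecreasing in $a$. The squared norm splits as
\[
\|x+z\|_S^2-\|y+z\|_S^2=\big(\|x+z\|_1^2-\|y+z\|_1^2\big)+\big(\|x+z\|_2^2-\|y+z\|_2^2\big).
\]
The $\ell_2$ part equals $\|x\|_2^2-\|y\|_2^2+2\langle x-y,z\rangle$, which is affine in $z$. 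The $\ell_1$ part is the obstruction, since $\ell_1$ itself fails $(S)$. The plan is to pick $z$ so that the $\ell_1$ part is controlled while the inner-product part gives a gain of order $a\|x-y\|_2^2$ or better. Note also that $\|x-y\|_S\ge d$ forces $\|x-y\|_1\ge d/\sqrt2$.

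\emph{Step 1 (finite support).} By density and the stability of $U(x,y;a)$ under small perturbations, I may assume $x$ and $y$ are finitely supported. I then split into two cases according to whether the $\ell_2$-distance is comparable to the $\ell_1$-distance.

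\emph{Case A: $\|x-y\|_2\ge\eta$ for some $\eta=\eta(d)>0$ to be fixed.} Take $z=\pm t(x-y)/\|x-y\|_S$ with $t\le a$ small. Along this line, $\varphi(s)=\|x+s(x-y)\|_1$ and $\psi(s)=\|y+s(x-y)\|_1$ are convex piecewise linear. The $\ell_2$ part contributes $\pm 2t\|x-y\|_2^2/\|x-y\|_S$ linearly in the sign, while $\|x\|_S=\|y\|_S$ fixes the constant terms. I choose the sign that makes the linear $\ell_2$ gain agree with the one-sided derivative of $\varphi^2-\psi^2$; if they disagree for both signs, convexity of the $\ell_1$ pieces gives a bounded defect. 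Comparing $z$ and $-z$, the first-order terms cannot both cancel the $\ell_2$ gain, because their sum over the two signs is controlled by second differences of convex functions, which have a definite sign. This yields $U^{(2)}\gtrsim a\eta^2$, possibly with a cruder constant.

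\emph{Case B: $\|x-y\|_2<\eta$ but $\|x-y\|_1\ge d/\sqrt2$.} Then $x-y$ is spread over many coordinates with small entries. Here I would use the $\ell_1$ part directly, with $z$ supported on one coordinate $\gamma$. Choose $z=\pm a' e_\gamma$ with $a'$ comparable to $a$, at a coordinate where $x_\gamma$ and $y_\gamma$ have different signs or different magnitudes relative to $a'$. Alternatively, following Theorem~\ref{L1} as a model, choose $z=-\sum_{\gamma\in A}x_\gamma$-type truncations that cancel part of $x$ but not $y$. The second option seems better: pick $A$ where $|x_\gamma|>|y_\gamma|$, with $\sum_{A}|x_\gamma-y_\gamma|\ge d/(2\sqrt2)$, and let $z$ reduce $x$ toward $y$ on $A$ up to total mass $a$. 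This changes $\|x+z\|_1-\|y+z\|_1$ by about $\min(a,d)$, while the $\ell_2$ parts change by at most $O(a\eta)$ since all entries involved are small. Choosing $\eta$ small relative to $d$ keeps the $\ell_2$ perturbation below the $\ell_1$ gain. The $\ell_2$ terms must also be checked: $|\|x\|_2^2-\|y\|_2^2|=|\|x\|_1^2-\|y\|_1^2|$, which I must handle jointly; positivity of one of $z$ or $-z$ handles this.

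\textbf{Main obstacle.} The main obstacle is Case A. The $\ell_1$ part can be non-differentiable and adversarial, exactly as in the $\ell_1$ counterexample with $x=\tfrac12\delta_s+\tfrac12\delta_t$ and $y=\tfrac14\delta_s+\tfrac34\delta_t$. There $\|x+z\|_1=\|y+z\|_1$ locally, and only the $\ell_2$ term separates the two vectors. I would first attempt to show that one can choose $z$ on the common sign pattern region, keeping all coordinates away from sign changes, so that $\|\cdot\|_1$ is locally linear and equal for both points. Then $\langle x-y,z\rangle$ alone governs the difference. Where sign patterns differ substantially, the resulting $\ell_1$ separation is itself of order $d$, which reduces to Case B.
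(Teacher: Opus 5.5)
\textbf{Verdict: your Case~B (small $\|x-y\|_2$) has a genuine gap. Case~A is correct and cleaner than the paper's Subcase~2b, but you should state the identity it rests on.}

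\emph{The gap in Case~B.} This case does the work of the paper's Case~1 and Subcase~2a, and the argument you give for it does not work. You claim that a truncation of mass about $a$ on $A=\{|x_\gamma|>|y_\gamma|\}$ moves $\|x+z\|_1-\|y+z\|_1$ by about $\min(a,d)$. This is unjustified, and as stated it is false.
If $z$ reduces $x$ toward $y$, e.g.\ $z=-(x-y)\mathbf 1_{A''}$, then on same-sign coordinates with $|y_\gamma|\ge|x_\gamma|/2$ both $\ell_1$-norms drop by the same amount $|x_\gamma-y_\gamma|$. Nothing is gained; this is exactly the flatness that makes $\ell_1$ fail property~(S).
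If $z=-x\mathbf 1_{A''}$, the gain is $\sum_{A''}(|x_\gamma-y_\gamma|+|x_\gamma|-|y_\gamma|)\le 2\|(x-y)\mathbf 1_{A''}\|_1$ at cost $\|x\mathbf 1_{A''}\|_1$. That gain is a negligible fraction of the cost when $A''$ lies where $|y_\gamma|/|x_\gamma|\approx 1$. A one-coordinate $z$ moves the $\ell_1$-difference by at most $2|x_\gamma-y_\gamma|<2\eta$.

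\emph{The missing idea} is a selection step based on $\|x\|_1\le 1$. The coordinates of $A$ with $|x_\gamma-y_\gamma|<c|x_\gamma|$ carry less than $c$ of the mass of $x-y$. So for $c=d/(4\sqrt2)$, at least half of the one-sided gap lies on $A_g=\{\gamma\in A:\ |x_\gamma-y_\gamma|\ge c|x_\gamma|\}$. On $A_g$ you have $|x_\gamma|\le\eta/c$. Hence if $\eta\ll c\min(a,d)$ you can choose a finite $A''\subseteq A_g$ with $\|x\mathbf 1_{A''}\|_1$ comparable to $\min(a,d)$. Then $z=-x\mathbf 1_{A''}$ gains at least $c\|x\mathbf 1_{A''}\|_1$, which is of order $d\min(a,d)$. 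This is the paper's $A_1/A_2/A_3$ construction, and it forces $\eta$ to depend on $a$ as well as on $d$.

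\emph{The cross term.} Your handling of $2\|x\mathbf 1_{A''}\|_1(\|x\|_1-\|y\|_1)$ in $\|y+z\|_S^2-\|x+z\|_S^2$ is also off. Passing to $-z$ does not help, since adding $x\mathbf 1_{A''}$ leaves the $\ell_1$-difference unchanged on same-sign coordinates. The correct fix is the Case~B hypothesis itself:
$\bigl|\|x\|_1^2-\|y\|_1^2\bigr|=\bigl|\|x\|_2^2-\|y\|_2^2\bigr|\le\|x-y\|_2\|x+y\|_2<2\eta$.
Hence $\bigl|\|x\|_1-\|y\|_1\bigr|<\sqrt2\,\eta$, which is negligible against $c$. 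The paper avoids this term differently: it splits into $A_4$ and $A_5$ and reflects $y$ on $A_4$, so that both one-sided changes in Lemma~\ref{lem:squared-transfer} are favourable.

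\emph{Case~A.} The fact that makes it work is this: because $z\parallel w:=x-y$, one has $y+sw=x+(s-1)w$, i.e.\ $\psi(s)=\varphi(s-1)$. Without this, your ``second differences'' would have no definite sign.
Put $G(s)=\|x+sw\|_S^2$. Then $G(0)=G(-1)=1$, and $G(s)-\|w\|_2^2s^2=\varphi(s)^2+(\text{affine})$ is convex. Monotonicity of unit-length chord slopes then gives
$\|x+sw\|_S^2-\|y+sw\|_S^2=G(s)-G(s-1)\ge 2s\|w\|_2^2$ for all $s\ge 0$.
So $z=a\,w/\|w\|_S$ alone works: no comparison of $\pm z$ and no smallness of $t$ are needed. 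It gives $U(x,y;a)\ge a\|x-y\|_2^2/(2(1+a))$ for \emph{every} pair of $S$-unit vectors.

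\emph{Comparison with the paper.} Unlike Subcase~2b, your Case~A needs neither the restriction to $A$ nor the normalisation $\|x\|_1\ge\|y\|_1$. After that normalisation the paper has to redefine $A$, and the new $A$ need not carry the one-sided gap that Subcase~2a relies on.
So the step you call the main obstacle is the easy half, and your fallback is unnecessary. Once Case~B is repaired as above, your two-case scheme is a complete proof, shorter than the paper's four branches.
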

The way the norm $\|\cdot\|_S$ is defined employs a standard technique for defining an equivalent strictly convex renorming of separable Banach spaces, however, this approach cannot work for many separable spaces to get a uniform (S) renorming (for example when every operator into $\ell_2$ is compact). 

\begin{remark}
We remark in passing that for any index set $\Gamma$ the bidual space $X = \ell_1(\Gamma)^{**}$ has an equivalent norm with uniform property (S).

To see this, observe that $X$ is nothing but $C(\beta \Gamma)^*$, where $\beta \Gamma$ is the \v{C}ech--Stone compactification of the discrete space $\Gamma$. By the Lebesgue decomposition theorem, 
\[ X = \ell_1(\Gamma) \oplus_1 N(\beta \Gamma), \]
where $N(\beta \Gamma)$ is the space of atomless measures on $\beta \Gamma$. $N(\beta \Gamma)$ is an atomless AL-lattice, hence by Kakutani's representation theorem, it is an $L_1(\mu)$-space for some (atomless) measure $\mu$. By Maharam's theorem, there exists a decomposition of $N(\beta \Gamma)$ into the $\ell_1$-sum of $L_1(\mu_i)$ $(i \in I)$, where $\{\mu_i\colon i \in I\}$ is a family of mutually singular probability measures; necessarily atomless. Furthermore, $|\Gamma| \leqslant |I|$. Thus, $\ell_1(\Gamma)$ (the $\ell_1$-sum of one-dimensional spaces) embeds as a 1-complemented subspace of $N(\beta \Gamma)$, hence so does $X$. Since both $X$ and $N(\beta \Gamma)$ are isomorphic to their squares, and embed as complemented subspaces of each other, by the Pe\l{}czy\'{n}ski decomposition method (\cite[Proposition 3]{Pelczynski:1960}; see also \cite[Theorem 2.2.3]{AlbiacKalton:2006}), they are isomorphic. However, by Theorem~\ref{L1}, $N(\beta \Gamma)$ has uniform property (S) being isometrically isomorphic to $L_1(\mu)$ with $\mu$ atomless.
\end{remark}

%
%


\section{Preliminaries}
The notation used in this paper is primarily based on \cite{AlbiacKalton:2006}. $S_X$ denotes the unit sphere of a normed space $X$. A Banach space $G$ is called \emph{Gurari\u{\i}} if it exhibits \emph{almost universal disposition for finite-dimensional normed spaces}, as follows:

\begin{itemize}
\item[] Given any finite-dimensional spaces $E\subseteq F$, an isometric embedding $T\colon E\to G$, and $\varepsilon > 0$, there exists an extension $T_\varepsilon$ of $T$ from $F$ into $G$ with $\|T_\varepsilon\| \|T_\varepsilon^{-1} \| < 1+ \varepsilon$.
\end{itemize}

The term pays homage to V.~I.~Gurari\u{\i}, who demonstrated the existence of separable Gurari\u{\i} spaces in \cite{Gurarii:1966}. Furthermore, there exists a unique separable Gurari\u{\i} space up to isometry. Garbuli\'nska-W\k{e}grzyn and Kubi\'s constructed Gurari\u{\i} spaces with arbitrary density in \cite{GarbKubis}.

Recently, Tursi \cite{Tursi:2020} introduced a lattice-theoretic analogue of Gurari\u{\i} spaces. Specifically, Tursi established the existence of a unique (up to isometry) separable Banach lattice, $\mathfrak{T}$, exhibiting the following property, which we designate as $(T)$:

\begin{itemize}
\item[] Given any two finitely generated Banach lattices $E\subseteq F$ with $E = \langle a_1, \ldots, a_n\rangle$, lattice-isometric embedding $J\colon E\to \mathfrak{T}$, and $\varepsilon > 0$, there exists a lattice-isometric embedding $\widehat{J}\colon F\to \mathfrak{T}$ such that $\max\limits_{i= 1, \ldots, n}\|Ja_i - \widehat{J}a_i\|<\varepsilon$.
\end{itemize}

The lattice $\mathfrak{T}$ was constructed as the Fra\"{\i}ss\'e limit of the category of finitely generated Banach lattices, similarly to how the Gurari\u{\i} space can be identified as the Fra\"{\i}ss\'e limit of the category of finite-dimensional normed spaces (\cite[Theorem 3.3]{BenYaacov:2015}).

\section{Proofs}
\begin{proof}[Proof of Theorem A] For \eqref{ThAi}, we identify $X$ with $X$-valued a.e.~constant functions on $\Sigma$. Let us first suppose that $L_1(\mu)$ is separable, in which case it is isometric to $L_1([0,1]^{\mathbb N})$. In particular, $L_1(\mu, X) \equiv Y$ isometrically, where $Y = L_1([0,1]^{\mathbb N}, X)$.\smallskip

Suppose that $x,y$ are two unit vectors in $X$. Set $d = \|x-y\|$ and suppose that $0 < a < 1$. Let $A\in \mathcal{A}$ be a set with $\mu(A) = a$. Take $z = -x\cdot\mathds{1}_{A} \in L_1(\mu, X)$. Since, at the beginning of the proof, we identified $X$ with the subspace of
$L_1(\mu,X)$ consisting of a.e.\ \emph{constant} $X$-valued functions, we may
treat $x,y$ as constant functions on $\Sigma$. Therefore
\begin{align*}
\|x+z\|_{L_1(\mu,X)}
&=\int_{\Sigma}\|(x+z)(t)\|_X\,d\mu(t)
 =\int_{A^c}\|x\|_X\,d\mu+\int_{A}\|x-x\|_X\,d\mu \\
&=(1-a)\cdot\|x\|_X+ a\cdot 0 = 1-a,\\[0.5ex]
\|y+z\|_{L_1(\mu,X)}
&=\int_{A^c}\|y\|_X\,d\mu+\int_A\|y-x\|_X\,d\mu
 =(1-a)\cdot\|y\|_X+a\,\|y-x\|_X\\
&= (1-a)+a\,d = 1-a+ad.
\end{align*}
Thus, $U_{L_1(\mu, X)}(x,y;a) \geqslant ad$.\smallskip

To estimate $U_Y(d;a)$, it is enough to consider unit vectors $x, y$ in a dense subset of $S_Y$. For this, let us consider only unit vectors that depend on only finitely many coordinates; say, for $n\in F$, where $F$ is a finite subset of $\mathbb N$. We may naturally identify $Y$ with $L_1\big([0,1]^{\mathbb N\setminus F}, L_1([0,1]^F, X)\big),$ hence the conclusion follows. The non-separable case follows from the same argument and the fact that any two-dimensional subspace of such an $L_1(\mu)$-space is contained in isometric copy of $L_1([0,1]^{\mathbb N})$.

Let us now conduct the proof of clause \eqref{ThAii} and the proof of clause \eqref{ThAiii} will be analogous; the necessary modifications will be explained subsequently. \smallskip

Let $d\in (0,2]$ and $a > 0$. Let $G$ be a Gurari\u{\i} space and suppose that $x,y\in S_G$ are vectors with $\|x-y\| \geqslant d$. Let $E$ be the linear span of $x$ and $y$.  To prove that $U_G(x,y;a) \geqslant C$ it is sufficient to embed $E$ isometrically into some Banach space $Y$ such that $U_Y(x,y;a) \geqslant C$; the sufficiency is immediate from the definition of Gurari\u{\i} space and stability of $U(x,y;a)$ under small perturbations. \smallskip

For this, in the light of clause \eqref{ThAi}, it is sufficient to consider $Y = L_1$ because by \cite{Herz:1963} (or \cite{Lindenstrauss:1964}), $L_1$ is isometrically universal for all 2-dimensional normed spaces. Indeed, take unit vectors $x,y\in G$ with $\|x-y\|\geqslant d$. Fix $\varepsilon > 0$ and let $T\colon {\rm span}\{x,y\}\to Y$ be an isometry. For $d\in (0,2)$ and $a>0$, we may find $z_Y\in Y$ witnessing that $U_Y(Tx, Ty, a)\geqslant C$. Since $G$ is of almost universal disposition for finite-dimensional normed spaces, there is an $(1+\varepsilon)$-isometry $S\colon {\rm span}\{ Tx, Ty, z_Y\}\to G$ that extends $T^{-1}$. Consequently, as $\varepsilon$ was arbitrary, $U_G(d,a)\geqslant U_Y(d,a)$, which, together with Theorem~\ref{L1} completes the proof of the lower estimate in Theorem~\ref{ThA} for $U_G(d; a)$. The upper estimate follows from Theorem~\ref{ThmC}.\smallskip

The proof of clause \eqref{ThAiii} is analogous. Let us still denote by $G$ a Banach lattice with property $(T)$. This time we take $Y = L_1([0,1], C(\{0,1\}^{\mathbb N}, L_1[0,1]))$, which by clause \eqref{ThAi} is a lattice with uniform property $(S)$ that is universal for separable Banach lattices. In this setting, for two unit vectors $x,y\in G$ with $\|x-y\|\geqslant d$ and $\varepsilon > 0$ we consider an isometric lattice embedding $T\colon {\rm latt}\{x,y\}\to Y$. Let $z_Y\in Y$ be a witness for $U_Y(Tx, Ty, a)\geqslant C$. By property $(T)$ of $G$, there is a lattice-isometric embedding $S\colon {\rm latt}\{ Tx, Ty, z_Y\}\to G$ that extends $T^{-1}$. The conclusion is then the same as in clause \eqref{ThAii}.\end{proof}

\begin{proof}[Proof of Theorem~\ref{L1}]
Given $f$, $g$ in the unit sphere of $L_1$ and $a>0$, we want to compute $U(d;a)$, where $d:= \|f-g\|$. In view of the stability of $U(f,g;a)$, we can assume that both $f$ and $g$ are step functions.

Let $\theta > 0$ be small enough and $A:= [f\cdot g \leqslant 0]$. Consider the case where $A$ is non-degenerate so that it is a union of finitely many intervals. 
Notice that if $h\in L_1, h \geqslant 0$, and $h$ is supported on $A$, then 
\begin{itemize}
    \item $\| \mathds{1}_A g - h\| - \| \mathds{1}_A f - h\| = \| \mathds{1}_A g\|-\|\mathds{1}_A f\| + 2\|h\wedge f\|$,
    \item  $\| \mathds{1}_A f + h\| - \| \mathds{1}_A g +h\| = \| \mathds{1}_A f\|-\|\mathds{1}_A g\| + 2\|h\wedge -g\|$.
\end{itemize}

Therefore, if $u_f$ and $u_g$ are disjoint step functions supported on $A$ with $\|u_f \| = \|\mathds{1}_A f\|$ and $\|u_g \| = \|\mathds{1}_A g\|$, then for every $a>0$ we have \[
    U_{L_1}(f,g;a) = U_{L_1}(\mathds{1}_{A}f +u_f,\mathds{1}_A g+u_g;a).
\]
Consequently, without loss of generality, we may suppose that $f\cdot g\geqslant 0$. Let $\delta\in [0, \|\mathds{1}_A f\|)$. For the sake of computing $U_{L_1}(f,g; a)$ we may lower $\|\mathds{1}_A f\|$ by $\delta$ and increase $\|\mathds{1}_A g\|$ by $\delta$ by perturbing both by $h:=-\mathds{1}_B f$ with $B:= [0,t] \cap A$ such that $\|h \| = \delta$ for some $t$. Analogously,  for $\delta\in [0, \|\mathds{1}_A g\|)$ we may lower $\|\mathds{1}_A g\|$ by $\delta$ any and increase $\|\mathds{1}_A f\|$ by $\delta$ by adding to both $h:=-\mathds{1}_B g$ with $B$ of the form $[0,t] \cap A$ and $\|h \| = \delta$. 


From this discussion, is proved when $\|\mathds{1}_A f\| \vee \|\mathds{1}_A g\| \geqslant d\tfrac{a}{4}$, so let us assume that the reverse inequality holds.

The rest of the proof consists of devising an algorithm for finding a step function $h$ with $\|h\| \leqslant a$  for which 
$\big| \|f+h\| - \|g+h\| \big| = U_{L_1}(f, g;a)$. 

By applying an appropriate sign-change isometry, we may assume that both $f$ and $g$ are non-negative. We can then rearrange $[0,1]$ to assume that $[f-g \geqslant 0] \cap [f>0] = [0,b]$ and $[g-f>0] =(b,s]$ for some $s\leqslant 1$. Subsequently, we may apply the natural isometry from $L_1[0,s] $ to $L_1[0,1]$ that preserves step functions, so without loss of generality $s=1$. 

Since in this case, $f\vee g $ is strictly positive, we may apply another isometry to reduce to the case where $f\vee g $ is constant. After these simplifications we have
\begin{itemize}
    \item $f\vee g = C\mathds{1}_{[0,1]} = \mathds{1}_{[0,b]}f + \mathds{1}_{(b,1]} g$ for some non-zero $C$,
    \item $[f\geqslant g]=[0,b], \quad  [g>f] = (b,1]$,
    \item $d = \|f-g\| = \int_0^b(f-g) + \int_b^1 (g-f)$,
    \item $1 = \|f\| =   bC +\left( \int_b^1 g -\int_b^1(g-f)     \right)  = C -  \int_b^1(g-f)$
    \item $1 = \|g\| =   (1-b)C +\left( \int_0^b f -\int_0^b(f-g)     \right)  = C -  \int_0^b(f-g)$
    \item $\int(f-g)_+ = bC - \int_0^b g$,
    \item $\int(g-f)_+ = (1-b)C - \int_b^1 f$.
\end{itemize}

The third and fourth equations imply together that $ \int_b^1(g-f)= \int_0^b(f-g) $ and hence, because of the second equation, both integrals are equal to $d/2$ and $C$ is equal to $ (2+d)/2$. \smallskip

\noindent {\bf Case 1.} 
 $a\geqslant (\int_0^b  f)\vee (\int_b^1 g) =(bC)\vee (1-b)C$. \smallskip

 Suppose, for definiteness, that $a\geqslant \int_0^b f$ and set $h:= -f\mathds{1}_{[0,b]} = -C\mathds{1}_{[0,b]}$. 
 We then have $\|f+h\|= 1-bC$ and $\|g+h\|= 1 +bC -2\int_0^b g$. Thus
 \begin{align*}
 \|g+h\| - \|f+h\| &= 2bC - 2\left(\int_0^b f - \int_0^b (f - g) \right)\\
 & = 2 \int_0^b (f - g) = d,
 \end{align*}
which is the largest  $\big| \|f+h\| - \|g+h\| \big| $ can be for any vector $h\in L_1$. \smallskip

\noindent {\bf Case 2.} 
$a\leqslant (\int_0^b f)\vee (\int_b^1 g) =(bC)\vee (1-b)C$.\smallskip

First, we make our final reduction by employing a measure-preserving rearrangement of the intervals $[0,b]$ and $(b,1]$ to make $g$ non-decreasing on $[0,b]$ and $f$ non-decreasing on $(b,1]$. Since $a\leqslant bC$ and $f$ takes constantly the value $C$ on $[0,b]$, we get $0<t\leqslant b$ so that $\int_0^t f = tC = a$. Set $h_1:= -f\mathds{1}_{[0,t]}$. Then
\begin{eqnarray}\label{xz1} 
    \|f+h_1\| = 1 - \|h_1\| = 1-a = 1-tC \\ \label{yz1} \|g+h_1\| = 1+ tC -2\int_0^t g.
\end{eqnarray}
Since $g$ is non-decreasing on $[0,b]$, we have $\int_0^t g\leqslant \tfrac{t}{b} \int_0^b g$. Since $\int_0^b (f-g) =d/2$, we have $ \int_0^b g =\int_0^b f - d/2 = bC - d/2$, and hence 
$\int_0^t g \leqslant \tfrac{t}{b} (bC-d/2)$.  Recalling that $C=(2+d)/2$ and $a=tC$, in view of \eqref{xz1} and \eqref{yz1} we have 
 
\begin{eqnarray}
\label{z1}
\|g+h_1\|- \|f+h_1\| &\geqslant&
2Ct-2\tfrac{t}{b}(bC-d/2) \\
&=&\tfrac{t}{b}d = \tfrac{ad}{bC} = {\tfrac{2}{2+d}\cdot b}ad. 
\end{eqnarray}
Similarly, we get a step function $h_2$ supported on $(b,1]$ with $\|h_2\|=a$ for which
\begin{eqnarray} \label{z2} \|f+h_2\|- \|g+h_2\| \geqslant {\frac{2}{2+d}(1-b)}ad \end{eqnarray}
From the construction it is evident that either $h_1$ or $h_2$ maximises $\big| \|f+h\| - \|g+h\| \big|$ over all vectors $h\in L_1$ having norm at most $a$. From \eqref{z1} or \eqref{z2} we get that that for either $h:=h_1$ or $h:=h_2$ we have
\[ 
    \big| \|f+h\| - \|g+h\| \big| \geqslant  {\frac{4}{2+d}}ad.
\]

This completes the proof of the lower bound for $U_{L_1}(d;a)$ in \eqref{UL1}. As noted in the proof, the argument also gives the upper bound, but the upper bound also follows from  Theorem~\ref{ThmC} and the obvious fact that $U(d;a) \leqslant d$ for any $a>0$.\end{proof}

\subsection{Proof of Theorem B${}^\prime$}

Throughout this subsection $(\mathcal M,\tau)$ is a diffuse finite von Neumann algebra with a faithful normalised trace $\tau(1)=1$ and we identify $\mathcal M_*$ with $L_1(\mathcal M,\tau)$ via $\varphi_x(a)=\tau(ax)$.

\begin{lemma}\label{lem:unitary-normalisation}
Let $h\in L_1(\mathcal M)$ and write its polar decomposition $h=v|h|$ with $v\in \mathcal M$ a partial isometry. There exists a unitary $u\in \mathcal M$ such that
\[
uh=|h|\ (\ge 0).
\]
Consequently, for all $x,y\in L_1(\mathcal M)$ and $a>0$,
\[
  U_{\mathcal M_*}(x,y;a)=U_{\mathcal M_*}(ux,uy;a).
\]
\end{lemma}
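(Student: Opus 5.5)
The key input is that $\mathcal M$ is finite: in a finite von Neumann algebra every partial isometry extends to a unitary. Concretely, for $h=v|h|$ the initial projection is $v^*v=s(|h|)$, the support of $|h|$, and the final projection is $vv^*=s(|h^*|)$. In a finite algebra Murray--von Neumann equivalence $p\sim q$ forces $1-p\sim 1-q$. This is the one genuinely structural step and the main point to check; it follows from the comparison theorem and finiteness, or from the centre-valued trace, since $T(1-p)=1-T(p)=1-T(q)=T(1-q)$. So choose a partial isometry $w$ with $w^*w=1-v^*v$ and $ww^*=1-vv^*$, and set $u_0=v+w$. Then $u_0$ is unitary, because the cross terms vanish by orthogonality of the initial and final projections, and $u_0|h|=v|h|=h$ since $w|h|=w(1-v^*v)|h|=0$. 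Put $u:=u_0^*$; then $uh=u_0^*v|h|=v^*v|h|=|h|$, using $w^*v=0$. This proves the first assertion.

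For the consequence, I will show that left multiplication $L_u\colon x\mapsto ux$ is a surjective linear isometry of $L_1(\mathcal M,\tau)$. For $x\in L_1(\mathcal M)$ we have $|ux|=((ux)^*ux)^{1/2}=(x^*x)^{1/2}=|x|$, so $\|ux\|_1=\tau(|ux|)=\tau(|x|)=\|x\|_1$. An equivalent route is the duality $\|x\|_1=\sup_{\|b\|\le1}|\tau(bx)|$ together with the substitution $b\mapsto bu$. Surjectivity holds because $L_{u^*}$ is the inverse of $L_u$. In the predual picture this is $\varphi\mapsto\varphi(\,\cdot\,u)$, which is clearly isometric.

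Finally, any surjective linear isometry $J$ preserves the modulus. Since $J$ maps the ball $\{\|z\|\le a\}$ onto itself,
\[
U(Jx,Jy;a)=\sup_{\|z\|\le a}\big|\|Jx+z\|-\|Jy+z\|\big|=\sup_{\|w\|\le a}\big|\|J(x+w)\|-\|J(y+w)\|\big|=U(x,y;a),
\]
where we substitute $z=Jw$. Applying this with $J=L_u$ gives the claimed identity $U_{\mathcal M_*}(x,y;a)=U_{\mathcal M_*}(ux,uy;a)$. Note that $u$ is arbitrary here, so the identity holds for every unitary, and in particular for the one normalising $h$.
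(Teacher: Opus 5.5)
Your proof is correct and follows the paper's route: the centre-valued trace gives $1-v^*v\sim 1-vv^*$, you complete $v$ to a unitary, and left multiplication by a unitary is a surjective isometry of $L_1(\mathcal M)$, so it preserves $U$. Your bookkeeping of initial and final projections is actually more careful than the paper's: your $u=v^*+w^*$ is genuinely unitary, whereas the paper's literal choice ($r^*r=1-e$, $rr^*=1-f$, $u=v^*+r$) has the two projections swapped and should read $r^*r=1-f$, $rr^*=1-e$.
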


\begin{proof}
Let $e=v^*v$ and $f=vv^*$ be the initial and final projections of $v$. In a finite von Neumann algebra there is a centre-valued trace ${\sf T}:\mathcal M\to Z(\mathcal M)$ with $\tau=\phi\circ{\sf T}$ for some faithful normal state $\phi$ on $Z(\mathcal M)$ (\cite[Theorem III.2.5.7]{Blackadar:2006}). Since ${\sf T}(e)={\sf T}(f)$, also ${\sf T}(1-e)={\sf T}(1-f)$, hence $1-e$ and $1-f$ are Murray-von Neumann equivalent. Choose a partial isometry $r\in\mathcal M$ with $r^*r=1-e$ and $rr^*=1-f$, and put $u:=v^*+r$. Then $u$ is a unitary and $uh=(v^*+r)v|h|=v^*v|h|=|h|$. Left multiplication by a unitary is an onto isometry of $L_1(\mathcal M)$, so it preserves the $(S)$-modulus.
\end{proof}

We record two elementary facts.

\begin{lemma}\label{lem:dual-bimod}
Let $E_{\mathcal A}:\mathcal M\to\mathcal A$ be the $\tau$-preserving conditional expectation onto a von Neumann subalgebra $\mathcal A\subset\mathcal M$.
\begin{romanenumerate}
\item For every $u\in L_1(\mathcal M)$,
\[
\|u\|_1=\sup\{\,\Re\tau(bu):\ b\in\mathcal M,\ \|b\|_\infty\le 1\,\}.
\]
If moreover $u\in L_1(\mathcal A)$, the supremum can be taken over $b\in\mathcal A$ (because $\tau(bu)=\tau(E_{\mathcal A}(b)u)$ and $\|E_{\mathcal A}(b)\|\le \|b\|$).
\item $E_{\mathcal A}$ is $\mathcal A$-bimodular: $E_{\mathcal A}(a_1 m a_2)=a_1E_{\mathcal A}(m)a_2$ for $a_1,a_2\in\mathcal A$, $m\in\mathcal M$. In particular, if $w\in L_1(\mathcal M)$ satisfies $E_{\mathcal A}(w)=0$, then $\tau(aw)=0$ for all $a\in\mathcal A$.
\end{romanenumerate}
\end{lemma}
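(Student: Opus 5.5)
The statement to prove is Lemma~\ref{lem:dual-bimod}, which has two parts. Both are standard non-commutative $L_1$ facts, and I would prove them directly from the duality $L_1(\mathcal M)^*=\mathcal M$ and the defining properties of $E_{\mathcal A}$.

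\textbf{Part (i).} I would first establish the two inequalities for the displayed formula.
\begin{itemize}
\item For ``$\ge$'': use the tracial Hölder inequality $|\tau(bu)|\le\|b\|_\infty\|u\|_1$. It follows from the polar decomposition $u=v|u|$ by writing $\tau(bu)=\tau(|u|^{1/2}bv|u|^{1/2})$ and applying Cauchy--Schwarz for the positive functional $\tau$.
\item For ``$\le$'': take the polar decomposition $u=v|u|$ and set $b=v^*\in\mathcal M$, so $\|b\|_\infty\le 1$. Then $\tau(v^*u)=\tau(v^*v|u|)=\tau(|u|)=\|u\|_1$, since $v^*v$ is the support projection of $|u|$. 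So the supremum is attained.
\end{itemize}
For the refinement with $u\in L_1(\mathcal A)$, I would proceed as follows.
\begin{itemize}
\item Note that $E_{\mathcal A}$ extends contractively to $L_1$.
\item The identity $\tau(bu)=\tau(E_{\mathcal A}(b)u)$ for $u\in L_1(\mathcal A)$ follows from $\tau\circ E_{\mathcal A}=\tau$ and bimodularity: $\tau(bu)=\tau(E_{\mathcal A}(bu))=\tau(E_{\mathcal A}(b)u)$. This holds for $u\in\mathcal A$ first, then for all $u\in L_1(\mathcal A)$ by density of $\mathcal A$ in $L_1(\mathcal A)$ together with continuity of both sides in $\|\cdot\|_1$.
\item Since $E_{\mathcal A}$ is a unital positive map, hence contractive, $\|E_{\mathcal A}(b)\|\le\|b\|$.
\end{itemize}
So replacing $b$ by $E_{\mathcal A}(b)\in\mathcal A$ does not decrease $\Re\tau(bu)$ beyond what is already attained, and the supremum over $\mathcal A$ equals the supremum over $\mathcal M$. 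Alternatively, when $u\in L_1(\mathcal A)$ the partial isometry $v$ in its polar decomposition already lies in $\mathcal A$, which gives the same conclusion directly.

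\textbf{Part (ii).} I would invoke Tomiyama's theorem: a norm-one projection onto a subalgebra is a conditional expectation, hence $\mathcal A$-bimodular. More concretely, I would characterise $E_{\mathcal A}(m)$ as the unique element of $\mathcal A$ with $\tau(E_{\mathcal A}(m)a)=\tau(ma)$ for all $a\in\mathcal A$. I would then check that $a_1E_{\mathcal A}(m)a_2$ satisfies the defining relation for $a_1ma_2$:
\[
\tau(a_1E_{\mathcal A}(m)a_2a)=\tau(E_{\mathcal A}(m)\,a_2aa_1)=\tau(m\,a_2aa_1)=\tau(a_1ma_2a),
\]
using traciality and $a_2aa_1\in\mathcal A$. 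Uniqueness follows from faithfulness of $\tau$ on $\mathcal A$.

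For the ``in particular'' statement, extend $E_{\mathcal A}$ to $L_1$ by continuity. Then for $a\in\mathcal A$,
\[
\tau(aw)=\tau(E_{\mathcal A}(aw))=\tau(aE_{\mathcal A}(w))=0.
\]
Both steps here are justified by bimodularity and trace preservation on $L_1$, which pass to $L_1$ by continuity from $\mathcal M$.

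The only mild obstacle is this passage from $\mathcal M$ to $L_1$. It is handled by the $\|\cdot\|_1$-contractivity of $E_{\mathcal A}$, which itself follows from the $\mathcal A$-version of part (i) by duality.
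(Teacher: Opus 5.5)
Your proposal is correct and consistent with the paper, which records these as elementary facts without proof and justifies only the $\mathcal A$-refinement of (i), exactly as you do, via $\tau(bu)=\tau(E_{\mathcal A}(b)u)$ and $\|E_{\mathcal A}(b)\|\le\|b\|$; your H\"older/polar-decomposition argument, the trace-duality (or Tomiyama) proof of bimodularity, and the passage to $L_1$ via $\|\cdot\|_1$-contractivity fill in the standard details the paper omits. One caution: the characterisation $\tau(E_{\mathcal A}(m)a)=\tau(ma)$ already encodes right $\mathcal A$-modularity, so if $E_{\mathcal A}$ is defined as a $\tau$-preserving norm-one projection, lead with Tomiyama rather than with that characterisation.
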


\begin{lemma}\label{lem:lift-dual}
Let $\mathcal A\subset\mathcal M$ be a von Neumann subalgebra with $\tau$-preserving conditional expectation $E_{\mathcal A}$. For $x,y\in L_1(\mathcal M)$ put
\[
f:=E_{\mathcal A}(x),\qquad g:=E_{\mathcal A}(y),\qquad w:=x-f=y-g
\]
(so $E_{\mathcal A}(w)=0$). Then for every $z\in L_1(\mathcal A)$ we have
\begin{equation}\label{eq:lifting-abs}
\bigl|\ \|x+z\|_1-\|y+z\|_1\ \bigr|
\ \ge\
\bigl|\ \|f+z\|_1-\|g+z\|_1\ \bigr|.
\end{equation}
\end{lemma}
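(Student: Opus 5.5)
The plan is to prove the stronger one-sided statements
\[
\|x+z\|_1\ \ge\ \|f+z\|_1\quad\text{and}\quad \|y+z\|_1\ \ge\ \|g+z\|_1,
\]
which follow from contractivity of $E_{\mathcal A}$ on $L_1$. They do not, however, give the absolute-value inequality \eqref{eq:lifting-abs} directly. For that I will use the duality formula of Lemma~\ref{lem:dual-bimod}(i) together with the vanishing $\tau(aw)=0$ from Lemma~\ref{lem:dual-bimod}(ii).

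Without loss of generality $\|f+z\|_1\ge\|g+z\|_1$; the other case is symmetric because $w=x-f=y-g$. It then suffices to show
\[
\|x+z\|_1-\|y+z\|_1\ \ge\ \|f+z\|_1-\|g+z\|_1 .
\]

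\begin{enumerate}
\item Since $f+z\in L_1(\mathcal A)$, Lemma~\ref{lem:dual-bimod}(i) gives $b\in\mathcal A$ with $\|b\|_\infty\le1$ and $\Re\tau(b(f+z))\ge\|f+z\|_1-\varepsilon$. Because $b\in\mathcal A$ and $E_{\mathcal A}(w)=0$, we have $\tau(bw)=0$. Hence
\[
\|x+z\|_1\ \ge\ \Re\tau(b(x+z))\ =\ \Re\tau(b(f+z))\ \ge\ \|f+z\|_1-\varepsilon .
\]
This is just the contraction estimate again.
\item To make the right-hand side of the target large, I need an \emph{upper} bound for $\|y+z\|_1$ in terms of $\|g+z\|_1$. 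Such a bound is false in general: $w$ may have large norm.
\end{enumerate}

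Step~(2) is the main obstacle. The inequality as stated cannot hold for arbitrary $w$. Take $\mathcal A=\mathbb C$, so $f=\tau(x)$, $g=\tau(y)$, and let $z=-f$. Then the right-hand side is $|f-g|$, while the left-hand side is $\bigl|\|w\|_1-\|w+f-g\|_1\bigr|$, which can be $0$ if $w$ is chosen with large oscillation. So some additional hypothesis must be in force in the intended application.

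The natural candidate is that $x$ and $y$ are reduced by Lemma~\ref{lem:unitary-normalisation} to commuting positive elements generating $\mathcal A$, so that in fact $w=0$ after the reduction. Alternatively, the difference $x-y$ could lie in $L_1(\mathcal A)$ with $\mathcal A$ a masa containing the relevant spectral projections.

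My plan is therefore to first verify whether the paper's setting forces $x,y\in L_1(\mathcal A)$ up to a part $w$ that is orthogonal in a lattice sense. Examples would be $w$ being $\mathcal A$-central with disjoint support, or the situation of Lemma~\ref{lem:L-summand-obstruction} where $L_1(\mathcal M)$ splits as $L_1(\mathcal A)\oplus_1(\ker E_{\mathcal A})$ on the relevant vectors. In such a setting I would argue exactly as in Lemma~\ref{lem:L-summand-obstruction}: $\|x+z\|_1=\|f+z\|_1+\|w\|_1$ and $\|y+z\|_1=\|g+z\|_1+\|w\|_1$, so the $\|w\|_1$ terms cancel and \eqref{eq:lifting-abs} holds with equality.

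If no such splitting is available, I would try to prove the inequality only for the supremum over $z$, that is, $U(x,y;a)\ge U(f,g;a)$. The idea would be to enlarge $z$ by a piece in $L_1(\mathcal M)$ of size $o(1)$ that cancels $w$ on the relevant support, using diffuseness. I expect this to be the genuinely hard part.
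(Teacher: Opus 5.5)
Your diagnosis is correct, and the defect lies in the statement and in the paper's proof, not in your reasoning: \eqref{eq:lifting-abs} is false, already for $z=0$ and commutative $\mathcal M$. Take $\mathcal M=L_\infty[0,1]$ with Lebesgue trace and $\mathcal A=\mathbb C\mathbf 1$, so that $E_{\mathcal A}(m)=\tau(m)\mathbf 1$. Put $x=\mathbf 1_{[0,1/2]}-\mathbf 1_{(1/2,1]}$ and $y=x-\tfrac12$. Then $\|x\|_1=\|y\|_1=1$, $f=0$, $g=-\tfrac12$ and $w=x=y-g$. For $z=0$ the left-hand side of \eqref{eq:lifting-abs} is $0$, while the right-hand side is $\tfrac12$.

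This is your example made concrete, with one correction. With $z=-f$ one gets $y+z=w+(g-f)$, not $w+(f-g)$. This is harmless only because your $w$ is symmetric, and you should also say that $f-g$ is real and at most the amplitude of $w$.

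The paper's proof fails exactly where your step~(2) says it must. It has two \emph{lower} bounds, $\|x+z\|_1\ge\tau(a(x+z))$ and $\|y+z\|_1\ge\tau(b(y+z))$, and ``subtracts'' them to obtain
\[
\|x+z\|_1-\|y+z\|_1\ \ge\ \tau\bigl(a(x+z)\bigr)-\tau\bigl(b(y+z)\bigr).
\]
That inference would need the reverse bound $\|y+z\|_1\le\tau(b(y+z))=\|g+z\|_1$. Combined with the contractivity of $E_{\mathcal A}$, this would force $\|y+z\|_1=\|g+z\|_1$, which is false in general; in the example, $\|y\|_1=1\neq\tfrac12=\|g\|_1$.

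Your fallback plans should be dropped rather than pursued.
\begin{itemize}
\item The weaker inequality $U(x,y;a)\ge U(f,g;a)$ fails in the same example. We have $U(f,g;a)\ge\bigl|\|f\|_1-\|g\|_1\bigr|=\tfrac12$ for every $a>0$, while $U(x,y;a)\le 2a$ because $\|x\|_1=\|y\|_1=1$. So no small correction of $z$ exploiting diffuseness can help.
\item The extra hypotheses you propose ($w=0$, or an $\oplus_1$-splitting as in Lemma~\ref{lem:L-summand-obstruction}) do make the inequality trivial. However, they are not available where the lemma is used. In the proof of Theorem~B$^{\prime}$, Lemma~\ref{lem:unitary-normalisation} only arranges $x-y\ge0$, not that $x,y$ become commuting positive elements. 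Moreover, $\mathcal A$ is generated by the spectral projections of $x-y$ alone, so $w\neq0$ even for commutative $\mathcal M$.
\end{itemize}

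In fact the example above is exactly of that form: $x-y=\tfrac12\mathbf 1\ge0$ generates $\mathcal A=\mathbb C\mathbf 1$, and $\|f-g\|_1=\|x-y\|_1=\tfrac12$. So the paper's derivation of Theorem~B$^{\prime}$ from this lemma is invalid as written. This holds even in the commutative case, where the conclusion itself is Theorem~\ref{L1}.
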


\begin{proof}
Let $a:=\operatorname{sgn}(f+z)$ and $b:=\operatorname{sgn}(g+z)$ in $\mathcal A$, so $\|a\|=\|b\|=1$ and (by Lemma~\ref{lem:dual-bimod}(i))
\(
\|f+z\|_1=\tau(a(f+z)),\ \ \|g+z\|_1=\tau(b(g+z)).
\)
By Lemma~\ref{lem:dual-bimod}(i) again,
\[
\|x+z\|_1\ \ge\ \tau\bigl(a(x+z)\bigr),\qquad
\|y+z\|_1\ \ge\ \tau\bigl(b(y+z)\bigr).
\]
Subtracting and using $x=f+w$, $y=g+w$ together with Lemma~\ref{lem:dual-bimod}(ii),
\[
\|x+z\|_1-\|y+z\|_1\ \ge\ \tau(a(f+z))-\tau(b(g+z))+\underbrace{\tau((a-b)w)}_{=\,0}.
\]
Thus
\(
\|x+z\|_1-\|y+z\|_1\ \ge\ \|f+z\|_1-\|g+z\|_1.
\)
Exchanging the roles of $(x,f)$ and $(y,g)$ gives the opposite inequality, hence \eqref{eq:lifting-abs}.
\end{proof}

\begin{proof}
Fix arbitrary $x,y\in S_{L_1(\mathcal M)}$ with $d=\|x-y\|_1$. By Lemma~\ref{lem:unitary-normalisation} we may assume $h:=x-y\ge 0$. Let $\mathcal A$ be the von Neumann algebra generated by the spectral projections of $h$; then $h\in L_1(\mathcal A)$ and $E_{\mathcal A}(h)=h$. Put $f:=E_{\mathcal A}(x)$ and $g:=E_{\mathcal A}(y)$, so $f-g=h$ and $\|f-g\|_1=d$.

By Theorem~\ref{L1} there exists $z\in L_1(\mathcal A)$ with $\|z\|_1\le a$ such that
\[
\bigl|\ \|f+z\|_1-\|g+z\|_1\ \bigr|\ \ge\ \Big(\frac{4a}{2+d}\wedge 1\Big)\,d.
\]
Applying Lemma~\ref{lem:lift-dual} yields
\[
\bigl|\ \|x+z\|_1-\|y+z\|_1\ \bigr|\ \ge\ \Big(\frac{4a}{2+d}\wedge 1\Big)\,d.
\]
\end{proof}

\subsection{Proof of Theorem~\ref{ThmC}}

\begin{proof}
As $X$ is not super-reflexive, for every $\varepsilon>0$ the space $X$ contains a subspace
that is $(1+\varepsilon)$-isomorphic to $\ell_\infty^2$; this follows from James' uniformly
non-square theorem \cite{James:1964}. Passing to an ultrapower $X^{\mathscr U}$ with respect
to a countably incomplete ultrafilter, these almost isometric copies yield an \emph{isometric}
copy of $\ell_\infty^2$ inside $X^{\mathscr U}$. In particular, the unit sphere of
$X^{\mathscr U}$ contains a line segment of length $2$.

Since the $(S)$-modulus is stable under ultrapowers and taking subspaces, it is enough to
prove the desired upper bound in the case where the unit sphere contains such a line segment.
Thus we may (and do) work under the standing assumption that there exist unit vectors $v,w\in X$
such that the whole segment $\overline{vw}\subset S_X$ (hence $\|v-w\|=2$). This is the only
geometric feature used below.

Fix $d\in(0,2]$ and $a>0$. Let $x$ be the point of $\overline{vw}$ at distance $1-d/2$ from $v$
and let $y$ be the point of $\overline{vw}$ at distance $1-d/2$ from $w$. Then $x,y\in S_X$ and
$\|x-y\|=d$. Let $z\in X$ with $\|z\|\le a$ and, by symmetry, assume $\|x+z\|\le \|y+z\|$.
Write
\[
  y=t x + (1-t)w,
  \qquad
  t:=\frac{2-d}{2+d},
\]
so that
\[
  1-t=\frac{2d}{2+d}=\frac{d}{1+d/2}= \frac{2(d/2)}{1+d/2}.
\]
Then $y+z=t(x+z)+(1-t)(w+z)$ and hence
\begin{align*}
  \|y+z\|
  &\le t\|x+z\|+(1-t)\|w+z\|
   \le t\|x+z\|+(1-t)(1+a)\\
  &=\|x+z\|+(1-t)\bigl(1+a-\|x+z\|\bigr)
   \le \|x+z\|+(1-t)\bigl(1+a-(1-a)\bigr)\\
  &=\|x+z\|+2a(1-t)
   =\|x+z\|+\frac{4a}{2+d}\,d,
\end{align*}
where we used $\|w+z\|\le \|w\|+\|z\|\le 1+a$ and $\|x+z\|\ge \|x\|-\|z\|\ge 1-a$.
Therefore,
\[
  \bigl|\ \|x+z\|-\|y+z\|\ \bigr|\le \frac{4a}{2+d}\,d.
\]
Since also $\bigl|\|x+z\|-\|y+z\|\bigr|\le \|x-y\|=d$ for every $z$, we obtain
\[
  U_X(d;a)\le \Big(\frac{4a}{2+d}\wedge 1\Big)d,
\]
as required.
\end{proof}

\subsection{Proof of Theorem~\ref{thm:l1renorming}}

We begin by recording elementary bounds that will be used repeatedly:
\begin{equation}\label{eq:S-basic}
  \|u\|_1 \le \|u\|_S
  = \big(\|u\|_1^2+\|u\|_2^2\big)^{1/2}
  \le \sqrt{2}\,\|u\|_1
  \qquad(u\in \ell_1(\Gamma)).
\end{equation}

We shall also use, without further mention, the following \emph{symmetries}.
For $\varepsilon=(\varepsilon_\gamma)_{\gamma\in\Gamma}\in\{-1,1\}^\Gamma$, the coordinatewise
sign change $T_\varepsilon u := (\varepsilon_\gamma u(\gamma))_{\gamma\in\Gamma}$ is a linear
onto isometry of $(\ell_1(\Gamma),\|\cdot\|_1)$, $(\ell_1(\Gamma),\|\cdot\|_2)$ and
$(\ell_1(\Gamma),\|\cdot\|_S)$, hence it preserves the $(S)$-modulus.
Moreover, $U(x,y;a)=U(y,x;a)$ by definition (note that swapping $x$ and $y$ does \emph{not}
preserve the individual values of $\|x\|_1$, $\|x\|_2$, etc.; we only use the symmetry of $U$).

\begin{lemma}\label{lem:squared-transfer}
Let $x,y\in \ell_1(\Gamma)$ satisfy $\|x\|_S=\|y\|_S=1$, and let $z\in \ell_1(\Gamma)$ with
$\|z\|_S\le \theta$. Assume
\[
  \|y+z\|_2\ge \|y\|_2 \qquad\text{and}\qquad \|x+z\|_2\le \|x\|_2.
\]
Then
\[
  \|y+z\|_S^2-\|x+z\|_S^2
  \ \ge\ (\|y+z\|_1^2-\|y\|_1^2)+(\|x\|_1^2-\|x+z\|_1^2),
\]
and consequently
\[
  \|y+z\|_S-\|x+z\|_S
  \ \ge\
  \frac{(\|y+z\|_1^2-\|y\|_1^2)+(\|x\|_1^2-\|x+z\|_1^2)}{2(1+\theta)}.
\]
\end{lemma}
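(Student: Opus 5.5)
The plan is to expand the squared $S$-norms and use the normalisation $\|x\|_S=\|y\|_S=1$ to cancel the unperturbed terms. Write
\[
\|y+z\|_S^2-\|x+z\|_S^2=\big(\|y+z\|_S^2-\|y\|_S^2\big)+\big(\|x\|_S^2-\|x+z\|_S^2\big),
\]
which is legitimate because $\|x\|_S^2=\|y\|_S^2=1$. By the definition of $\|\cdot\|_S$, each bracket splits into an $\ell_1$-part and an $\ell_2$-part:
\[
\|y+z\|_S^2-\|y\|_S^2=(\|y+z\|_1^2-\|y\|_1^2)+(\|y+z\|_2^2-\|y\|_2^2),
\]
and similarly for $x$ with the signs reversed. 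The hypotheses $\|y+z\|_2\ge\|y\|_2$ and $\|x+z\|_2\le\|x\|_2$ make both $\ell_2$-contributions non-negative. Dropping them gives the first displayed inequality.

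For the second inequality, factor the difference of squares:
\[
\|y+z\|_S^2-\|x+z\|_S^2=(\|y+z\|_S-\|x+z\|_S)(\|y+z\|_S+\|x+z\|_S).
\]
By the triangle inequality, $\|y+z\|_S+\|x+z\|_S\le 2(1+\theta)$.

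The only point needing care is the sign of $\|y+z\|_S-\|x+z\|_S$. There are two cases.

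If the right-hand side $R$ of the first inequality is $\le 0$, the second claim asks only for $\|y+z\|_S-\|x+z\|_S\ge R/(2(1+\theta))$. This holds provided the difference is non-negative, which the first inequality does not guarantee. So that case needs its own argument. If the difference is negative, then $(\text{difference})\cdot(\text{sum})\ge R$, and dividing by the sum (which is $\le 2(1+\theta)$) gives difference $\ge R/\text{sum}$. Since $R<0$ and $\text{sum}\le 2(1+\theta)$, we have $R/\text{sum}\le R/(2(1+\theta))$, which points the wrong way. I therefore expect the intended reading is that the bound is used only when $R\ge 0$.

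If $R\ge 0$, then the first inequality shows the difference of squares is non-negative. Hence $\|y+z\|_S\ge\|x+z\|_S$, and dividing by the sum, which is at most $2(1+\theta)$, yields the claim.

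To cover the negative case honestly, I would note that it should be handled separately, or excluded (the lemma is presumably applied only when $R>0$). This sign bookkeeping is the only real obstacle; everything else is direct expansion.
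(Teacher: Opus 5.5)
Your argument for the first inequality is the paper's argument. The paper combines the two $\ell_2$ hypotheses into $\|y+z\|_2^2-\|x+z\|_2^2\ge\|y\|_2^2-\|x\|_2^2$ and then substitutes $\|y\|_2^2-\|x\|_2^2=\|x\|_1^2-\|y\|_1^2$, which amounts to your expansion. For the second inequality the paper does exactly your division step. It writes the difference of norms as the difference of squares over the sum, bounds the sum by $2(1+\theta)$, and says nothing about signs.

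So your objection applies to the paper's proof itself, and it is correct. The step is valid precisely when $\|y+z\|_S\ge\|x+z\|_S$, which holds in particular when $R\ge 0$. When $R<0$ the second inequality is not merely unproved but false. For example, take $\Gamma=\{1,2,3,4\}$ and
\[
y=\tfrac{1}{\sqrt{34}}(2,2,1,0),\quad x=\tfrac{1}{\sqrt{34}}(1,2,0,2),\quad z=\tfrac{1}{\sqrt{34}}(1,-2,-1,0),\quad \theta=\|z\|_S=\sqrt{11/17}.
\]
Then:
\begin{itemize}
\item $\|x\|_S=\|y\|_S=1$;
\item $y+z=\tfrac{1}{\sqrt{34}}(3,0,0,0)$ and $x+z=\tfrac{1}{\sqrt{34}}(2,0,-1,2)$, so both $\ell_2$ hypotheses hold with equality;
\item $R=\tfrac{9-25}{34}+0=-\tfrac{8}{17}$;
\item the conclusion fails, since
\[
\|y+z\|_S-\|x+z\|_S=\tfrac{3}{\sqrt{17}}-1\approx-0.27\ <\ -0.13\approx\frac{R}{2(1+\theta)}.
\]
\end{itemize}

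The lemma therefore needs the extra hypothesis $R\ge 0$, or more generally $\|y+z\|_S\ge\|x+z\|_S$. Your proof of that corrected version is complete. You do not need a separate argument for the negative case; it cannot exist. The correction does not damage the paper: in every place the lemma is used (Case~1 and the branches 2a.1 and 2a.2), the computed numerator is strictly positive.
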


\begin{proof}
By the $\ell_2$-assumptions,
\[
  \|y+z\|_2^2-\|x+z\|_2^2 \ \ge\ \|y\|_2^2-\|x\|_2^2.
\]
Since $\|x\|_S=\|y\|_S=1$, we have $\|y\|_2^2-\|x\|_2^2=\|x\|_1^2-\|y\|_1^2$, hence
\begin{align*}
\|y+z\|_S^2-\|x+z\|_S^2
&=(\|y+z\|_1^2-\|x+z\|_1^2)+(\|y+z\|_2^2-\|x+z\|_2^2)\\
&\ge (\|y+z\|_1^2-\|x+z\|_1^2)+(\|x\|_1^2-\|y\|_1^2)\\
&=(\|y+z\|_1^2-\|y\|_1^2)+(\|x\|_1^2-\|x+z\|_1^2).
\end{align*}
For the second estimate, write
\[
\|y+z\|_S-\|x+z\|_S
=\frac{\|y+z\|_S^2-\|x+z\|_S^2}{\|y+z\|_S+\|x+z\|_S},
\]
and note that $\|y+z\|_S+\|x+z\|_S\le (\|y\|_S+\|x\|_S)+2\|z\|_S\le 2(1+\theta)$.
\end{proof}

The proof of Theorem~\ref{thm:l1renorming} is necessarily case-based, and this reflects a genuine
geometric dichotomy rather than a technical artefact.

The renorming $\|\cdot\|_S$ combines a flat $\ell_1$ geometry
with a strictly convex $\ell_2$ component. The argument separates according to whether the
$\ell_2$ part of the difference $x-y$ contributes significantly or not.

Indeed, if the $\ell_2$-norm of $x-y$ on a suitable one-sided set is non-negligible (Subcase~2b),
then the strict convexity coming from the $\ell_2$ component dominates.
In this regime, the proof exploits a uniform quantitative form of strict convexity of the
renorming $\|\cdot\|_S$, yielding a lower bound that is genuinely quadratic (and hence uniform).

If the $\ell_2$-contribution is small (Subcase~2a), then the vectors $x$ and $y$ are necessarily
`flat' in the sense that their difference behaves essentially like an $\ell_1$-difference.
In this case, the argument reduces to a refined version of the proof that $L_1$ has uniform
property~(S), relying on one-sided $\ell_1$ mass and local sign structure.

Thus the proof reflects the fact that the renorming interpolates between two different geometric
mechanisms, and uniform property~(S) is obtained by showing that at least one of them must be
effective at a quantitative level.

\begin{proof}[Proof of Theorem~\ref{thm:l1renorming}]
Fix $0<d<2$ and $a>0$, and set
\[
\alpha:=\min\{a,d,1\}.
\]
Let $x,y\in S_{(\ell_1,\|\cdot\|_S)}$ satisfy $\|x-y\|_S\ge d$. We shall find $z\in \ell_1(\Gamma)$
with $\|z\|_S\le \alpha$ such that
\begin{equation}\label{eq:target-gap-D}
  \big|\ \|x+z\|_S-\|y+z\|_S\ \big|
  \ \ge\ c\,\min\!\Big\{\alpha,\ \alpha^2,\ \frac{\alpha^5}{1+a}\Big\}
  \ =\ c\,\frac{\alpha^5}{1+a},
\end{equation}
(Here the last equality uses $\alpha\le 1$ and $1+a\ge 1$.)

Let
\[
B^c:=\{\gamma\in\Gamma:\ x(\gamma)\,y(\gamma)\ge 0\},
\qquad
B:=\Gamma\setminus B^c.
\]
By a coordinatewise sign change we may assume that $x\ge 0$ everywhere, and hence $y\ge 0$ on
$B^c$ and $y\le 0$ on $B$.

\medskip\noindent
\textbf{Case 1:} $\|(x-y)\mathbf 1_B\|_1>\alpha/2$.

Since on $B$ we have $x\ge 0\ge y$, we get
\(
\|(x-y)\mathbf 1_B\|_1=\|x\mathbf 1_B\|_1+\|y\mathbf 1_B\|_1.
\)
Hence at least one of $\|x\mathbf 1_B\|_1$ and $\|y\mathbf 1_B\|_1$ is $>\alpha/4$; by symmetry of
$U$ we may assume $\|x\mathbf 1_B\|_1>\alpha/4$.
Put $m:=\alpha/4$, choose $\lambda\in(0,1]$ so that $\|\lambda x\,\mathbf 1_B\|_S=m$, and define
$z:=-\lambda x\,\mathbf 1_B$. Then $\|z\|_S=m\le \alpha$.

On $B$ we have $|y+z|=|y-\lambda x|=|y|+\lambda x\ge |y|$ and $|x+z|=(1-\lambda)x\le x$, so
\[
  \|y+z\|_2\ge \|y\|_2 \qquad\text{and}\qquad \|x+z\|_2\le \|x\|_2.
\]
Moreover, writing $\delta:=\|z\|_1$ (so $\delta=\lambda\|x\mathbf 1_B\|_1$), we have
\[
  \|y+z\|_1=\|y\|_1+\delta,\qquad \|x+z\|_1=\|x\|_1-\delta,
\]
whence
\[
(\|y+z\|_1^2-\|y\|_1^2)+(\|x\|_1^2-\|x+z\|_1^2)
=(\|y\|_1+\delta)^2-\|y\|_1^2+\|x\|_1^2-(\|x\|_1-\delta)^2
=2\delta(\|x\|_1+\|y\|_1).
\]
Lemma~\ref{lem:squared-transfer} therefore yields
\[
\|y+z\|_S-\|x+z\|_S
\ \ge\ \frac{2\delta(\|x\|_1+\|y\|_1)}{2(1+m)}.
\]
Using $\delta\ge \|z\|_S/\sqrt2=m/\sqrt2$ and $\|x\|_1+\|y\|_1\ge \sqrt2$, we obtain
\[
\|y+z\|_S-\|x+z\|_S \ \ge\ \frac{m}{1+m}\ \ge\ \frac{m}{2}=\frac{\alpha}{8}.
\]
This proves \eqref{eq:target-gap-D} in Case~1.

\medskip\noindent
\textbf{Case 2:} $\|(x-y)\mathbf 1_B\|_1\le \alpha/2$.

We first reduce to the situation where one of the two \emph{one-sided} $\ell_1$-gaps is large.
By \eqref{eq:S-basic} we have $\|x-y\|_1\ge \|x-y\|_S/\sqrt2\ge \alpha/\sqrt2$, hence
\begin{equation}\label{eq:Bc-normgap}
\|(x-y)\mathbf 1_{B^c}\|_1
=\|x-y\|_1-\|(x-y)\mathbf 1_B\|_1
\ \ge\ \frac{\alpha}{\sqrt2}-\frac{\alpha}{2}
=\frac{\sqrt2-1}{2}\,\alpha.
\end{equation}
Let
\[
A:=\{\gamma\in B^c:\ y(\gamma)<x(\gamma)\},\qquad
A':=\{\gamma\in B^c:\ x(\gamma)<y(\gamma)\}.
\]
Then
\[
\|(x-y)\mathbf 1_{B^c}\|_1
=\|(x-y)\mathbf 1_A\|_1+\|(y-x)\mathbf 1_{A'}\|_1,
\]
so at least one of $\|(x-y)\mathbf 1_A\|_1$ and $\|(y-x)\mathbf 1_{A'}\|_1$ is
$\ge \frac{\sqrt2-1}{4}\alpha$.
Since $U(x,y;a)=U(y,x;a)$ and $\|x-y\|_S=\|y-x\|_S$, by interchanging $x$ and $y$ if necessary
we may assume
\begin{equation}\label{eq:one-sided-l1-gap}
\|(x-y)\mathbf 1_A\|_1\ \ge\ \frac{\sqrt2-1}{4}\,\alpha.
\end{equation}
From now on we work with the parameter
\[
\alpha_0:=\frac{\sqrt2-1}{4}\,\alpha,
\]
and (to simplify notation) we relabel $\alpha_0$ as $\alpha$.
This causes only an absolute change in the final universal constant $c$ in \eqref{eq:target-gap-D},
and it keeps the constraint $\|z\|_S\le \alpha$ (since $\alpha_0\le \alpha$).

\smallskip
\emph{Subcase 2a (small $\ell_2$-mass on $A$).}
Assume
\[
\|(x-y)\mathbf 1_A\|_2\ \le\ \Big(\frac{\alpha}{16}\Big)^{\!2}.
\]
Split
\[
A_1:=\{\gamma\in A:\ x-y\le (\alpha/16)\,x\},\qquad A_2:=A\setminus A_1.
\]
If $\|(x-y)\mathbf 1_{A_2}\|_1\le \alpha/16$, then, since $\|(x-y)\mathbf 1_A\|_1\ge \alpha$ by \eqref{eq:one-sided-l1-gap},
\[
\|x\|_1\ \ge\ \|x\mathbf 1_{A_1}\|_1
\ \ge\ \frac{16}{\alpha}\,\|(x-y)\mathbf 1_{A_1}\|_1
\ \ge\ \frac{16}{\alpha}\Big(\alpha-\frac{\alpha}{16}\Big)\ =\ 15\ >\ 1,
\]
a contradiction. Hence $\|(x-y)\mathbf 1_{A_2}\|_1> \alpha/16$.

Since $\|(x-y)\mathbf 1_A\|_\infty\le \|(x-y)\mathbf 1_A\|_2\le (\alpha/16)^2$, there exists a
finite $A_3\subset A_2$ with
\begin{equation}\label{eq:A3-window-D}
  \Big(\frac{\alpha}{16}\Big)^2
  <\ \|(x-y)\mathbf 1_{A_3}\|_1\ \le\
  \Big(\frac{\alpha}{16}\Big)^2+\Big(\frac{\alpha}{16}\Big)^2
  \ =\ \frac{\alpha^2}{128}.
\end{equation}
From the definition of $A_2$ we also have
\begin{equation}\label{eq:xA3-D}
  \|x\mathbf 1_{A_3}\|_1\ \le\ \frac{16}{\alpha}\,\|(x-y)\mathbf 1_{A_3}\|_1\ \le\ \frac{\alpha}{8}.
\end{equation}
Split $A_3=A_4\cup A_5$ with
\[
A_4:=\{\gamma\in A_3:\ y\le x\le 2y\},\qquad
A_5:=\{\gamma\in A_3:\ 2y<x\}.
\]
At least one of $\|(x-y)\mathbf 1_{A_4}\|_1$ and $\|(x-y)\mathbf 1_{A_5}\|_1$ is
$\ge \tfrac12(\alpha/16)^2$.

\underline{2a.1: the $A_4$-branch.}
Set $z:=-2y\,\mathbf 1_{A_4}$. Using \eqref{eq:S-basic} (so $\|u\|_S\le \sqrt2\,\|u\|_1\le 2\|u\|_1$) and \eqref{eq:xA3-D}, we get
\[
\frac12\,\|z\|_S \le \|z\|_1
=2\|y\mathbf 1_{A_4}\|_1\le 2\|x\mathbf 1_{A_3}\|_1\le \alpha/4,
\]
and hence $\|z\|_S\le \alpha/2$.

On $A_4$ we have $y+z=-y$ and $x+z=x-2y\le 0$, hence
$|y+z|=y$ and $|x+z|=2y-x\le x$.
Consequently,
\[
\|y+z\|_2=\|y\|_2
\qquad\text{and}\qquad
\|x+z\|_2\le \|x\|_2,
\]
so Lemma~\ref{lem:squared-transfer} applies (with $\theta=\alpha/2$). Moreover, $\|y+z\|_1=\|y\|_1$ and
\[
\|x\|_1-\|x+z\|_1
=2\|(x-y)\mathbf 1_{A_4}\|_1,
\]
so
\[
\|x\|_1^2-\|x+z\|_1^2
\ \ge\ (\|x\|_1-\|x+z\|_1)\|x\|_1
\ \ge\ \frac{2}{\sqrt2}\,\|(x-y)\mathbf 1_{A_4}\|_1.
\]
Therefore, by Lemma~\ref{lem:squared-transfer},
\[
\|y+z\|_S-\|x+z\|_S
\ \ge\ \frac{1}{2(1+\alpha/2)}\cdot \frac{2}{\sqrt2}\,\|(x-y)\mathbf 1_{A_4}\|_1
\ \ge\ c_2\,\alpha^2.
\]

\underline{2a.2: the $A_5$-branch.}
Set $z:=-x\,\mathbf 1_{A_5}$. Then, by \eqref{eq:S-basic} and \eqref{eq:xA3-D},
\[
\|z\|_S\le \sqrt2\,\|z\|_1
=\sqrt2\,\|x\mathbf 1_{A_5}\|_1
\le \sqrt2\,\|x\mathbf 1_{A_3}\|_1
\le \sqrt2\cdot\frac{\alpha}{8}
\le \frac{\alpha}{4}.
\]

On $A_5$ we have $x+z=0$ and $y+z=y-x<0$, hence $|x+z|=0\le x$ and
$|y+z|=x-y\ge y$ (since $x>2y$ on $A_5$). Consequently,
\[
\|y+z\|_2\ge \|y\|_2
\qquad\text{and}\qquad
\|x+z\|_2\le \|x\|_2,
\]
so Lemma~\ref{lem:squared-transfer} applies (with $\theta=\alpha/2$). Also,
\[
(\|y+z\|_1-\|y\|_1)+(\|x\|_1-\|x+z\|_1)
=2\|(x-y)\mathbf 1_{A_5}\|_1.
\]
Using $\|x\|_1,\|y\|_1\ge 1/\sqrt2$, we get
\[
(\|y+z\|_1^2-\|y\|_1^2)+(\|x\|_1^2-\|x+z\|_1^2)
\ \ge\ \frac{2}{\sqrt2}\,\|(x-y)\mathbf 1_{A_5}\|_1,
\]
and hence, by Lemma~\ref{lem:squared-transfer},
\[
\|y+z\|_S-\|x+z\|_S \ \ge\ c_2\,\alpha^2,
\]
since $\|(x-y)\mathbf 1_{A_5}\|_1\ge \tfrac12(\alpha/16)^2$.

\smallskip
\emph{Subcase 2b (large $\ell_2$-mass on $A$).}
Assume
\[
\|(x-y)\mathbf 1_A\|_2\ >\ \Big(\frac{\alpha}{16}\Big)^{\!2}.
\]

\medskip

Put
\[
\Delta := \|x\|_1-\|y\|_1.
\]
In the computation below we will produce a lower bound for
\[
\|x+z\|_S^2-\|y+z\|_S^2
\]
which contains the term $2t\Delta\|u\|_1$, so it is convenient to assume $\Delta\ge 0$.
We now explain why we may do so without loss of generality, and why this does not interfere
with the Case~2 set-up.

For the current ordered pair, redefine $A \subseteq B^{c}$ and $A' \subseteq B^{c}$ as before.
If necessary, interchange $x$ and $y$ (using $U(x,y;a)=U(y,x;a)$) so that the one-sided estimate
\eqref{eq:one-sided-l1-gap} holds for $A$.
(This additional interchange is harmless in the branch where we fall back to Subcase~2a,
since Subcase~2a does not use the sign of $\Delta$.)

\smallskip
\noindent\emph{Step 1: enforcing $\Delta\ge 0$.}
If $\Delta\ge 0$, do nothing. If $\Delta<0$, replace the ordered pair $(x,y)$ by $(y,x)$.
This is legitimate because
\[
U(x,y;a)=U(y,x;a)
\quad\text{and}\quad
\|x-y\|_S=\|y-x\|_S,
\]
and all standing assumptions of Case~2 (in particular $\|(x-y)\mathbf 1_B\|_1\le \alpha/2$)
are symmetric in $x$ and $y$. After this replacement we relabel the pair again as $(x,y)$,
so from now on we may assume
\begin{equation}\label{eq:delta-nonneg}
\Delta=\|x\|_1-\|y\|_1\ge 0.
\end{equation}

\smallskip
\noindent\emph{Step 2.}
For this (possibly swapped) ordered pair, define $A\subseteq B^c$ exactly as before by
\[
A:=\{\gamma\in B^c:\ y(\gamma)<x(\gamma)\},
\qquad
u:=(x-y)\mathbf 1_A \ (\ge 0).
\]
We now distinguish two possibilities.
\begin{itemize}
\item If $\|u\|_2\le (\alpha/16)^2$, then (for the current ordered pair) we are in the situation
of Subcase~2a, and the argument of Subcase~2a applies to yield the desired lower bound for
$U(x,y;a)$.
(Here we use only the symmetry $U(x,y;a)=U(y,x;a)$ already exploited above; Subcase~2a does not
require \eqref{eq:delta-nonneg}.)
\item Otherwise,
\begin{equation}\label{eq:subcase2b-assumption}
\|u\|_2>(\alpha/16)^2,
\end{equation}
and together with \eqref{eq:delta-nonneg} this is exactly the standing assumption needed for
the Subcase~2b estimate below.
\end{itemize}

Thus, in proving Subcase~2b it suffices to treat the situation where
\eqref{eq:delta-nonneg} and \eqref{eq:subcase2b-assumption} hold simultaneously.

Now put
\[
z:=t\,u,
\qquad
t:=\min\Big\{1,\ \frac{\alpha}{\|u\|_S}\Big\}.
\]
Then $\|z\|_S=t\|u\|_S\le \alpha$, and since $\|u\|_S\le \|x-y\|_S\le 2$ we have $t\ge \alpha/2$.
Moreover, $\supp u\subseteq B^c$, so $x,y\ge 0$ on $\supp u$ and therefore
\[
\|x+z\|_1=\|x\|_1+t\|u\|_1,
\qquad
\|y+z\|_1=\|y\|_1+t\|u\|_1.
\]
Also,
\[
\|x+z\|_2^2-\|y+z\|_2^2
=(\|x\|_2^2-\|y\|_2^2)+2t\,\langle x-y,u\rangle
=(\|x\|_2^2-\|y\|_2^2)+2t\,\|u\|_2^2,
\]
since $u=(x-y)$ on $A$ and $u=0$ off $A$.

Consequently,
\begin{align*}
\|x+z\|_S^2-\|y+z\|_S^2
&=\bigl(\|x+z\|_1^2-\|y+z\|_1^2\bigr)+\bigl(\|x+z\|_2^2-\|y+z\|_2^2\bigr)\\
&=(\|x\|_1^2-\|y\|_1^2)+2t(\|x\|_1-\|y\|_1)\|u\|_1
  +(\|x\|_2^2-\|y\|_2^2)+2t\|u\|_2^2\\
&=2t(\|x\|_1-\|y\|_1)\|u\|_1+2t\|u\|_2^2
\ \ge\ 2t\|u\|_2^2
\ >\ 2t\Big(\frac{\alpha}{16}\Big)^{\!4},
\end{align*}
where we used $\|x\|_S=\|y\|_S=1$ to cancel the zeroth-order terms and \eqref{eq:delta-nonneg}.

Therefore,
\[
\|x+z\|_S-\|y+z\|_S
=\frac{\|x+z\|_S^2-\|y+z\|_S^2}{\|x+z\|_S+\|y+z\|_S}
\ \ge\ \frac{2t(\alpha/16)^4}{2(1+\|z\|_S)}
\ \ge\ \frac{t\,\alpha^4}{16^4(1+\alpha)}
\ \ge\ c_3\,\frac{\alpha^5}{1+a},
\]
where $c_3:=\dfrac{1}{2\cdot 16^4}$ (using $\|z\|_S\le \alpha\le a$).
\medskip

Combining Case~1, Subcase~2a and Subcase~2b, we have produced, in all situations, a vector
$z$ with $\|z\|_S\le \alpha$ satisfying \eqref{eq:target-gap-D} with a universal constant
\[
c:=\min\Big\{\ \frac18,\ c_2,\ \frac{1}{2\cdot 16^4}\ \Big\}\ >\ 0.
\]
Since $x,y$ with $\|x-y\|_S\ge d$ were arbitrary, it follows that
$U(d;a)\ge c\,\min\{\alpha,\alpha^2,\alpha^5/(1+a)\}>0$. Thus
$(\ell_1(\Gamma),\|\cdot\|_S)$ has uniform property \emph{(S)}.
\end{proof}

\section{Appendix: Banach lattices of almost universal disposition for finitely generated Banach lattices}

\begin{proposition}
    Every Banach lattice embeds isometrically as a closed sublattice of a~Banach lattice with property $(T)$ of the same density. Banach lattices with property $(T)$ have property $(S)$.
\end{proposition}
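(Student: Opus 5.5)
The proposition has two parts, and I would prove them separately. The embedding part follows the Garbuli\'nska-W\k{e}grzyn--Kubi\'s construction of Gurari\u{\i} spaces of arbitrary density, transplanted to finitely generated Banach lattices. The second part is already essentially contained in clause~\eqref{ThAiii} of Theorem~\ref{ThA}. In fact I would show uniform property~$(S)$, with the same lower bound as for the lattice $Y=L_1([0,1],C(\{0,1\}^{\mathbb N},L_1[0,1]))$.

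\emph{Embedding.} Let $E$ be a Banach lattice of density $\kappa$; we may assume $\kappa$ is infinite. I would build a continuous increasing chain $(E_\xi)_{\xi\le\omega_1\cdot\kappa}$ of Banach lattices of density $\kappa$, with $E_0=E$ and unions (closures) at limit stages.

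At each successor stage $E_{\xi+1}$ is obtained from $E_\xi$ by an amalgamation that handles all relevant triples simultaneously. A triple consists of a finitely generated sublattice $A\subseteq E_\xi$ generated by elements from a fixed dense set of size $\kappa$, a rational finitely generated extension $F\supseteq A$, and a lattice-isometric embedding of $A$. There are at most $\kappa$ such triples up to small perturbation; the lattice versions of finite-dimensional subspaces are countable dense families of finitely generated lattices given by finitely many rational lattice-linear relations.

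The key tool is the amalgamation (pushout) property in the category of Banach lattices with lattice-isometric embeddings. Given lattice isometries $A\to E_\xi$ and $A\to F$, I need a Banach lattice $W$ of density $\le\kappa$ into which both $E_\xi$ and $F$ embed lattice-isometrically, compatibly on $A$. I expect this to be the main obstacle. I would first try to get it from free Banach lattices: take $FBL$ over $E_\xi\oplus F$ modulo the closed ideal forcing the lattice relations of $E_\xi$ and of $F$ and identifying the two copies of $A$. The delicate point is proving that the induced maps are still isometric. For that I would use the fact that lattice homomorphisms into $\mathbb R$, and more generally into $C(K)$ or $L_1$ representations, can be glued along $A$. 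Alternatively, I would cite Tursi's amalgamation result for finitely generated lattices together with a directed-limit argument, which is the route the appendix reference suggests.

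At limit stages of cofinality $\omega_1$, every finitely generated sublattice lives in an earlier $E_\xi$, so every extension problem is solved up to $\varepsilon$. Hence the closure of the union has property~$(T)$, has density $\kappa$, and contains $E$ as a closed sublattice.

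\emph{Property $(S)$.} I would repeat the argument of Theorem~\ref{ThA}\eqref{ThAiii} for an arbitrary, possibly nonseparable, lattice $G$ with $(T)$. Fix $x,y\in S_G$ with $\|x-y\|\ge d$ and $\varepsilon>0$. The lattice ${\rm latt}\{x,y\}$ is separable, so it embeds lattice-isometrically into $Y$ by universality. Clause~\eqref{ThAi} then gives $z_Y\in Y$ with $\|z_Y\|\le a$ witnessing $U_Y(Tx,Ty;a)\ge U_Y(d;a)>0$.

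Property~$(T)$ applied to ${\rm latt}\{Tx,Ty\}\subseteq{\rm latt}\{Tx,Ty,z_Y\}$ yields a lattice isometry $\widehat J$ into $G$ with $\widehat J Tx\approx x$ and $\widehat J Ty\approx y$ within $\varepsilon$. Set $z=\widehat J z_Y$. By the stability of $U(x,y;a)$ under small perturbations, $U_G(x,y;a)\ge U_Y(d;a)-2\varepsilon$. Letting $\varepsilon\to0$ gives $U_G(d;a)\ge U_Y(d;a)>0$, which is in particular property~$(S)$.
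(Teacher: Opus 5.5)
Your argument for property $(S)$ is the paper's own (the last paragraph of the proof of Theorem~\ref{ThA}), done a little more carefully. Setting $z=\widehat J z_Y$ and paying $2\varepsilon$ is the correct way to use $(T)$, since $(T)$ only extends $T^{-1}$ approximately. It does give uniform $(S)$ with $U_G(d;a)\ge U_Y(d;a)$.

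\textbf{The embedding: a different route.} The paper inducts on the density. It writes $X$ as the closure of a continuous chain of smaller-density sublattices $X_\alpha$. At successor steps it pushes out $T_\beta$ and $X_{\beta+1}$ over $X_\beta$, then re-embeds the push-out into a $(T)$-lattice by the induction hypothesis, with Tursi's theorem as the base case. At limits it takes closures of unions. You instead run a one-shot Kubi\'s-style construction with bookkeeping, and harvest $(T)$ at a stage of cofinality $\omega_1$, where the union is already closed.

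What your route buys is that it avoids the paper's limit step for chains of countable cofinality. That step is true, but it needs one more amalgamation plus a perturbation argument. What it costs is that you must count extension problems. This needs the separability of the class of finitely generated Banach lattices and a perturbation lemma, both of which you only assert and should take from Tursi's Fra\"{\i}ss\'e analysis. The paper, by contrast, uses Tursi only as a black box.

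\textbf{The amalgamation step.} Both routes rest on push-outs of lattice-isometric embeddings in which the density of the push-out is the larger of the two densities. Cite this from Avil\'es--Rodr\'{\i}guez--Tradacete and Avil\'es--Tradacete, as the paper does. Your suggested mechanism, gluing real-valued lattice homomorphisms along $A$, cannot work in general, because such homomorphisms need not separate points: $L_1[0,1]$ has no nonzero lattice homomorphism into $\mathbb R$. The $C(K)$ and $L_1$ variants you mention are not made precise. The fallback via Tursi's amalgamation for finitely generated lattices plus a directed limit needs coherent choices of amalgams, which you do not supply.

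\textbf{Two bookkeeping errors.}
\begin{romanenumerate}
\item The ordinal $\omega_1\cdot\kappa$ has cofinality ${\rm cf}\,\kappa$, which is $\omega$ for, e.g., $\kappa=\aleph_\omega$. So your cofinality-$\omega_1$ argument does not cover the final stage. Simply stop at $E_{\omega_1}$, whose density is $\aleph_1\cdot\kappa=\kappa$ when $\kappa$ is uncountable.
\item For $\kappa=\aleph_0$, a chain of length $\omega_1$ only gives the density bound $\aleph_1$, and nothing in the construction keeps it separable. The separable case therefore has to be taken directly from Tursi's theorem, as in the paper.
\end{romanenumerate}
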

\begin{proof}By the main result of \cite{Tursi:2020}, the claim holds for separable Banach lattices, that is of density $\kappa = \omega$. Let $X$ be a Banach lattice of uncountable density $\kappa$. Inductively, suppose that each Banach lattice that is generated by fewer than $\kappa$ elements say $\lambda<\kappa$, embeds into a Banach lattice with property ($T$) that is generated by at most $\lambda + \omega$ many elements.\smallskip

Write $X$ as the completion of a strictly increasing union of lattices generated by fewer than $\kappa$ elements. Without loss of generality, this chain is continuous (we have completions at the limit steps), so $X$ contains a dense sublattice of the form $\bigcup_{\alpha<\kappa} X_\alpha$.\smallskip 

Let $u_\alpha\colon X_\alpha\to T_\alpha$ be an isometric lattice embedding, where each lattice $T_\alpha$ has property $(T)$ and is generated by $\alpha + \omega$ elements. The embeddings may be chosen in a way that respects $T_\alpha$, that is, $\alpha < \beta$ implies that $T_\alpha \subset T_\beta$ and $u_\beta|_{T_\alpha} = u_\alpha$. Indeed, at the limit steps, this is easy as completions of chains of $T_\alpha$ will retain property $(T)$. In the successor case, say for  $\alpha = \beta + 1$. The category of Banach lattices with isometric lattice embeddings admits push-outs, which have the property that the density of the push-out of two Banach lattices is the bigger of the densities of the respective lattices (see \cite[Section 3]{AvilesRodriguezTradacete:2018} and \cite[p.~15]{AvilesTradacete:2021}).\smallskip

Consequently, there is a Banach lattice $W$ with ${\rm dens}\, W < \kappa$ containing $T_\beta$ so that we may extend $u_\beta$ to a map $u\colon X_\alpha\to W$. Thus, by the inductive hypothesis, we may embed $W$ into a Banach lattice with property $(T)$ that has the same density.\smallskip 

The proof is complete as we have embedded $X$ into the closure of the union of $T_\alpha$ for $\alpha < \kappa$, which retains property $(T)$.\end{proof}

That Banach lattices with property $(T)$ have property $(S)$ is proved in the last paragraph of the proof of Theorem~\ref{ThA}.

\section{Open problems}\label{problems}
Let us list open problems concerning uniform property $(S)$.

\begin{enumerate}
    \item Does $c_0$ have equivalent renormings with uniform property $(S)$?
    \item Does every space have an equivalent renorming with uniform property $(S)$?
    \item Is $U(d,a)$ always smaller than $da$ (up to a constant)?
    \item Characterise those pointed metric spaces whose Lipschitz-free Banach spaces have uniform property (S). For example, by Rademacher's theorem, $F([0,1])$ is isometric to $L_1[0,1]$ so it does have uniform property (S), unlike $F(\mathbb N)$ that is isometric to $\ell_1$.
    \item If $X$ and $Y$ are normed spaces and $p\in (1,\infty)$, is there a constant $C_p > 0$ so that the $\ell_p$-direct sum of $X$ and $Y$ satisfies
    \[
        U_{X\oplus_p Y}(d;a) \geqslant C_p\cdot \min\{U_X(d;a), U_Y(d;a), U_{\ell_p}(d;a)\},
    \]
    where $d\in (0, 2)$ and $a > 0$?
    \item Is the set of spaces with uniform property (S) Borel in a Polish space encoding all separable Banach spaces? If so, what can be said about the Borel complexity of this set?
\end{enumerate}

\subsection*{Acknowledgements} We are indebted to Professor Z. Lipecki (Wroc{\l}aw) for providing us with reference \cite{FastSwierczkowski}. Support received from NCN Sonata-Bis 13 (2023/50/E/ST1/00067) is acknowledged with thanks.

\end{document}